
\documentclass[11pt]{amsart}
\usepackage{amsfonts}
\usepackage[colorlinks, bookmarks=true]{hyperref}
\usepackage{color,graphicx,shortvrb}
\usepackage{enumerate}
\usepackage{amsmath}
\usepackage{amssymb}

\setcounter{MaxMatrixCols}{10}

\newtheorem{thm}{Theorem}
\newtheorem{corollary}[thm]{Corollary}
\newtheorem{lemma}[thm]{Lemma}
\newtheorem{prop}[thm]{Proposition}
\theoremstyle{definition}

\theoremstyle{remark}
\newtheorem{remark}[thm]{Remark}
\newtheorem{problem}[thm]{Problem}

\numberwithin{equation}{section}

\def\J#1#2#3{ \left\{ #1,#2,#3 \right\} }

\usepackage[latin 1]{inputenc}

\begin{document}
\title{\v{C}eby\v{s}\"{e}v subspaces of JBW$^{*}$-triples}
\author[F.B. Jamjoom]{Fatmah B. Jamjoom}
\address{Department of Mathematics, College of Science, King Saud
University, P.O.Box 2455-5, Riyadh-11451, Kingdom of Saudi Arabia.}
\email{fjamjoom@ksu.edu.sa}
\author[A.M. Peralta]{Antonio M. Peralta}
\address{Departamento de An{\'a}lisis Matem{\'a}tico, Universidad de Granada,%
\\
Facultad de Ciencias 18071, Granada, Spain}
\curraddr{Visiting Professor at Department of Mathematics, College of
Science, King Saud University, P.O.Box 2455-5, Riyadh-11451, Kingdom of
Saudi Arabia.}
\email{aperalta@ugr.es}
\author[A.A. Siddiqui]{Akhlaq A. Siddiqui}
\address{Department of Mathematics, College of Science, King Saud
University, P.O.Box 2455-5, Riyadh-11451, Kingdom of Saudi Arabia.}
\email{asiddiqui@ksu.edu.sa}
\author[H.M. Tahlawi]{Haifa M. Tahlawi}
\address{Department of Mathematics, College of Science, King Saud
University, P.O.Box 2455-5, Riyadh-11451, Kingdom of Saudi Arabia.}
\email{htahlawi@ksu.edu.sa}
\keywords{\v{C}eby\v{s}\"{e}v/Chebyshev subspace; JBW$^*$-triples; \v{C}eby\v{s}\"{e}v/Chebyshev subtriple; von Neumann algebra; Brown-Pedersen quasi-invertibility; spin factor; minimum covering sphere.}
\subjclass{Primary  41A50; 41A52; 41A65; 46L10; Secondary 17C65; 46L05.}

\begin{abstract} We describe the one-dimensional \v{C}eby\v{s}\"{e}v subspaces of a JBW$^*$-triple $M,$ by showing that for a non-zero element $x$ in $M$, $\mathbb{C}x$ is a \v{C}eby\v{s}\"{e}v subspace of $M$ if, and only if,  $x$ is a Brown-Pedersen quasi-invertible element in ${M}$. We study the \v{C}eby\v{s}\"{e}v JBW$^*$-subtriples of a JBW$^*$-triple $M$. We prove that, for each non-zero \v{C}eby\v{s}\"{e}v JBW$^*$-subtriple $N$ of $M$, then exactly one of the following statements holds:\begin{enumerate}[$(a)$]\item $N$ is a rank one JBW$^*$-triple with dim$(N)\geq 2$ (i.e. a complex Hilbert space regarded as a type 1 Cartan factor). Moreover, $N$ may be a closed subspace of arbitrary dimension and $M$ may have arbitrary rank;
\item $N= \mathbb{C} e$, where $e$ is a complete tripotent in $M$;
\item $N$ and $M$ have rank two, but $N$ may have arbitrary dimension;
\item $N$ has rank greater or equal than three and $N=M$.
\end{enumerate}\smallskip

We also provide new examples of \v{C}eby\v{s}\"{e}v subspaces of classic Banach spaces in connection with ternary rings of operators.
\end{abstract}

\thanks{The authors extend their appreciation to the Deanship of Scientific Research at King Saud University for funding this work through research group no  RG-1435-020. The second author also is partially supported by the Spanish Ministry of Economy and Competitiveness project no. MTM2014-58984-P.  }

\maketitle
\section{Introduction}

Let $V$ be a subspace of a Banach space $X$. The subspace $V$ is called a \emph{\v{C}eby\v{s}\"{e}v {\rm(}Chebyshev{\rm)} subspace} of $X$ if and only if for each $x\in X$ there exists a unique point $x_{\circ }\in V$ such that $\hbox{dist}(x,V)=\left\Vert x-x_{\circ }\right\Vert $.\smallskip

Let $K$ be a compact Hausdorff space. A classical theorem due to A. Haar establishes that an $n$-dimensional subspace $V$ of the space $C(K)$, of all continuous complex-valued functions on $K$, is a \v{C}eby\v{s}\"{e}v subspace of $C(K)$ if, and only if, any non-zero $f\in V$ admits at most $n-1$ zeros (cf. \cite{Haar} and the monograph \cite[p. 215]{Singer}). Having in mind the Riesz representation theorem, and the characterization of the extreme points of the closed unit ball in the dual space of $C(K)$, we can easily see that, in the above conditions, $V$ is an $n$-dimensional \v{C}eby\v{s}\"{e}v subspace of $C(K)$ if, and only if, for every set $\{\delta_{t_{1}},\ldots, \delta_{t_n}\}$ of $n$-mutually orthogonal pure states we have $\displaystyle V\cap \bigcap_{i=1}^{n} \ker(\delta_{t_i}) = \{0\}$. This result implies that any non-zero $f$ in $C(K)$ spans a \v{C}eby\v{s}\"{e}v subspace of the latter space if, and only if, $f$ is invertible in the algebra $C(K)$.\smallskip

Later on, J.G. Stampfli proved in \cite[Theorem 2]{Stampfli}, that the scalar multiples of the unit element in a von Neumann algebra $M$ is a \v{C}eby\v{s}\"{e}v subspace of $M$.  In \cite{LeggScrantonWard1975}, D.A. Legg, B.E. Scranton, and J.D. Ward characterize the semi-\v{C}eby\v{s}\"{e}v and finite dimensional \v{C}eby\v{s}\"{e}v subspaces of $K(H)$, the algebra of compact operators on an infinite-dimensional Hilbert space $H$. They conclude that, for a separable Hilbert space $H$, there exist \v{C}eby\v{s}\"{e}v subspaces of every finite dimension in $K(H)$ \cite[Theorem 3]{LeggScrantonWard1975}, when $H$ is not separable $K(H)$ has no finite-dimensional \v{C}eby\v{s}\"{e}v subspaces \cite[Corollary 2]{LeggScrantonWard1975}.\smallskip

A.G. Robertson continued with the study on \v{C}eby\v{s}\"{e}v subspaces of von Neumann algebras in  \cite{Robertson1}, where he established the following results:

\begin{thm}\label{t Robertson 1dimensional Cebysev subspaces in von Neumann algebras}{\rm(}\cite[Theorem 6]{Robertson1}{\rm)} Let $x$ be a non-zero element in a von Neumann algebra $M$.
Then, the one dimensional subspace $\mathbb{C} x$ is a \v{C}eby\v{s}\"{e}v subspace of ${M}$ if and only if there is a projection $p$ in the
center of ${M}$ such that $p x$ is left invertible in $p{M}$ and $(1-p)x$ is right invertible in $(1-p){M}$.
\end{thm}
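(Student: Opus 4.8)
The plan is to normalise $\|x\|=1$ and to recast the \v{C}eby\v{s}\"{e}v property of $\mathbb{C}x$ as the non-existence of a \emph{flat segment}. For fixed $y$ the function $\lambda\mapsto\|y-\lambda x\|$ is convex and coercive on $\mathbb{C}$, so its set of minimisers is a nonempty compact convex set; hence best approximation fails to be unique for some $y$ precisely when this set contains a segment. Translating by a minimiser and rescaling, this means exactly that there exist $z\in M\setminus\{0\}$ and a scalar $c\neq0$ with $\|z\pm cx\|=\|z\|=\mathrm{dist}(z,\mathbb{C}x)$. Thus $\mathbb{C}x$ is \v{C}eby\v{s}\"{e}v if and only if no such \emph{flat pair} $(z,cx)$ exists, and the whole proof is organised around this reformulation.

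For sufficiency I would represent $M\subseteq B(H)$ and first dispose of the two extreme cases. If $x$ is left invertible, i.e. $x^{*}x\geq\delta^{2}1$ for some $\delta>0$, suppose a flat pair $(z,w)$ with $w=cx$ existed and put $d=\|z\|$. Choosing unit vectors $\xi_{n}$ with $\|z\xi_{n}\|\to d$ and expanding $\|(z\pm w)\xi_{n}\|^{2}\leq d^{2}$, the two inequalities add to give $2\|w\xi_{n}\|^{2}\to0$; but $\|w\xi_{n}\|=|c|\,\|x\xi_{n}\|\geq|c|\delta>0$, a contradiction. The right invertible case is identical after passing to adjoints and norming $z^{*}$. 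To glue these together through the central projection $p$ I would use that $p$ is central, so $M=pM\oplus_{\infty}(1-p)M$, $x$ leaves $pH$ and $(1-p)H$ invariant, and $z$ commutes with $p$. Given a flat pair, the norming vectors $\xi_{n}$ for $z$ satisfy $\|x\xi_{n}\|\to0$; left invertibility of $px$ on $pM$ forces $\|p\xi_{n}\|\to0$. Setting $\eta_{n}=z\xi_{n}/\|z\xi_{n}\|$, these norm $z^{*}$ and, by right invertibility of $(1-p)x$ on $(1-p)M$, satisfy $\|(1-p)\eta_{n}\|\to0$, so $\|p\eta_{n}\|\to1$. Now $pz\xi_{n}=zp\xi_{n}$ has norm $\leq\|z\|\,\|p\xi_{n}\|\to0$, while $pz\xi_{n}=\|z\xi_{n}\|\,p\eta_{n}$ has norm $\to d$; hence $d=0$, a contradiction. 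This yields sufficiency.

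For necessity I would argue by contraposition, assuming that no central projection works. Writing $g_{\delta}=\chi_{[0,\delta)}(x^{*}x)$ and $h_{\delta}=\chi_{[0,\delta)}(xx^{*})$ and denoting by $c(\cdot)$ the central cover, a short computation shows that a suitable $p$ exists iff $c(g_{\delta})\,c(h_{\delta})=0$ for some $\delta>0$; so now $c(g_{\delta})\,c(h_{\delta})\neq0$ for every $\delta>0$. Restricting to this central part and applying the comparison theorem, I obtain nonzero projections $e\leq g_{\delta}$, $f\leq h_{\delta}$ and a partial isometry $v\in M$ with $v^{*}v=e$, $vv^{*}=f$, where by construction $\|xe\|\leq\sqrt{\delta}$ and $\|fx\|=\|x^{*}f\|\leq\sqrt{\delta}$. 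Splitting an arbitrary unit vector along $e$ and $1-e$, and its image along $f$ and $1-f$, then gives $1-\sqrt{\delta}\leq\|v+tx\|\leq\sqrt{1+C\sqrt{\delta}}$ for all $|t|\leq1$ and a fixed constant $C$; that is, $v$ is \emph{approximately} flat in the direction of $x$.

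The main obstacle is upgrading this approximate flatness to a genuine flat pair, and it is genuinely delicate because $x$ need not have an actual kernel (think of an injective weighted shift with weights tending to $0$), so the family $\{v_{\delta}\}$ may converge weak-$*$ to $0$ and no naive limit survives. I expect to resolve this in one of two ways: either by passing to the enveloping von Neumann algebra $M^{**}$, in which the approximate kernels of $x$ and $x^{*}$ become ranges of honest spectral projections so that an exact flat partial isometry can be manufactured, and then transferring the conclusion back to $M$; or by exploiting that the metric projection onto the finite-dimensional subspace $\mathbb{C}x$ is continuous whenever it is \v{C}eby\v{s}\"{e}v, so that minimising sequences of scalars converge to the unique minimiser, and then extracting from $\{v_{\delta}\}$ a single $y$ (a weak-$*$ cluster point) for which $0$ and $x$ are two distinct best approximations. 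Proving that this cluster point retains full norm, so that the two approximate minimisers persist as exact ones, is the crux that the remainder of the argument must settle.
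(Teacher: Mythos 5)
Your reformulation of the \v{C}eby\v{s}\"{e}v property via flat pairs $\|z\pm cx\|=\|z\|=\hbox{dist}(z,\mathbb{C}x)$ is sound, and your sufficiency direction is complete, correct, and genuinely different from the paper's. The paper never argues spatially: it quotes this statement from \cite{Robertson1} and obtains it as the von Neumann specialization of Theorem \ref{t one dimensional Cebysev subspaces in a JBW*-triple}, where the implication (quasi-invertible $\Rightarrow$ \v{C}eby\v{s}\"{e}v) is proved dually, via Lemma \ref{l tech Roeberton Singer} and the support tripotent $v=s(\phi)$ of an extreme functional (cf. \cite[Proposition 4 and Lemma 1.6]{FridRusso}): the equalities $\phi(x)=0$, $\phi(y)=\|y\|=\|y-\lambda x\|$ force $P_2(v)(x)=P_1(v)(x)=0$, hence $x\perp v$, contradicting completeness of $r(x)$. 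Your parallelogram identity on norming vectors $\xi_n$, plus the passage to $\eta_n=z\xi_n/\|z\xi_n\|$ and centrality of $p$, is an elementary representation-level substitute for that structure theory; it buys transparency in $B(H)$, while the paper's argument buys validity in arbitrary JBW$^*$-triples, where no associative product or Hilbert space is available.

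The necessity direction, however, has a genuine gap, and in fact two. First, the approximate-to-exact upgrade: neither of your proposed fixes works as stated. A weak-$*$ cluster point $y$ of the family $\{v_\delta\}$ satisfies $\|y\pm cx\|\leq 1$ by weak-$*$ lower semicontinuity of the norm, but that semicontinuity goes the wrong way for the equalities you need: nothing prevents $\|y\|<1$, and your own weighted-shift example produces $v_\delta\to 0$ weak-$*$, so the cluster point is worthless. Passing to $M^{**}$ manufactures a flat pair in $M^{**}$, but that only refutes the \v{C}eby\v{s}\"{e}v property of $\mathbb{C}x$ \emph{in} $M^{**}$; there is no formal transfer back, since a subspace can be \v{C}eby\v{s}\"{e}v in $M$ without being so in $M^{**}$. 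Second, even granting exact flatness $\|v+tx\|=\|v\|$ on a segment, this does not by itself contradict uniqueness of best approximation: the convex function $t\mapsto\|v-tx\|$ on $\mathbb{C}\cong\mathbb{R}^2$ may be constant on your segment yet attain a strictly smaller minimum at a unique point elsewhere. You must additionally prove $\hbox{dist}(v,\mathbb{C}x)=\|v\|$, and your sketch never addresses this.

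Both holes are closed simultaneously by the paper's Lemma \ref{l Cebysev implies BP-q invert}, whose strategy (specialized to von Neumann algebras) is the repair you should adopt: no limiting process over $\delta$ is needed, because the relevant partial isometry exists exactly inside $M$. Take $u$ from the polar decomposition $x=u|x|$ (triple-theoretically, the range tripotent $r(x)$) and extend it to an extreme point $e$ of the unit ball with $e\geq u$ (\cite[Lemma 3.12]{Horn}). If no central $p$ works, $x$ is not Brown--Pedersen quasi-invertible, so either $\inf(\sigma(|x|)\setminus\{0\})=0$ or $e\gneqq u$; in the first case functional calculus gives $\|u-\lambda x\|=1$ exactly for all small $\lambda\geq 0$ --- note this is precisely where your injective-shift obstruction dissolves, since no kernel projection is ever invoked --- and in the second case one adds the orthogonal summand $e-u\perp u$ and uses \eqref{eq orthogonal are M-orthogonal} to keep the norm equal to $1$. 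For the distance issue, the paper's dichotomy is the missing ingredient: if $\hbox{dist}(e,\mathbb{C}x)\geq 1$, then every point of the flat segment is a best approximation to $e$ and uniqueness fails; if $\hbox{dist}(e,\mathbb{C}x)<1$, then some $\lambda_0 x$ satisfies $\|e-\lambda_0 x\|<1$, and the distance-to-extremals theorem (\cite[Corollary 2.4]{Jamjoom}; in the von Neumann setting this is the Brown--Pedersen fact that the open unit ball around an extreme point consists of quasi-invertibles) makes $\lambda_0 x$, hence $x$, quasi-invertible --- contradiction. Your spectral-projection, central-cover and comparison-theorem setup is a correct skeleton, closer in spirit to Robertson's original argument than to the paper's, but without these two exact ingredients the proof does not close.
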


\begin{thm}\label{t Robertson Cebysev finite dimensional von Neumann subalgebras in von Neumann algebras}{\rm(}\cite[Theorem 6]{Robertson1}{\rm)} Let $N$ be finite dimensional $^*$-subalgebra of an infinite dimensional von Neumann algebra ${M}$.
Suppose $N$ has dimension $>1$. Then $N$ is not a \v{C}eby\v{s}\"{e}v subspace of ${M}$.
\end{thm}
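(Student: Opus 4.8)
The plan is to show that $N$ violates the uniqueness requirement of the \v{C}eby\v{s}\"{e}v condition by producing a single $x\in M$ that admits two distinct best approximations in $N$; existence of a best approximation is automatic since $N$ is finite dimensional, so uniqueness is the only thing that can fail. First I would invoke the structure of finite dimensional C$^*$-algebras to write $N\cong\bigoplus_{k=1}^{r}M_{n_k}(\mathbb{C})$, so that $N$ is unital with unit $e=1_N$, a projection of $M$ that need not coincide with $1_M$, and a complete system of matrix units yields orthogonal projections $e^{(k)}_{ii}$, each minimal in $N$, with $\sum_{k,i}e^{(k)}_{ii}=e$. Since $\dim N>1$, no minimal projection of $N$ can equal $e$. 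I would then distinguish two cases according to whether $e=1_M$.

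If $e\neq 1_M$, I would choose a nonzero projection $g\le 1_M-e$ and take $x=g$. Because $g$ lies in the corner $(1_M-e)M(1_M-e)$ while every $n\in N$ lies in the orthogonal corner $eMe$, the difference $g-n$ is block diagonal and $\|g-n\|=\max(1,\|n\|)$. Hence $\mathrm{dist}(x,N)=1$ and the best approximations of $x$ form the entire closed unit ball of $N$; as $\dim N>1$ this contains more than one point (for instance $0$ and any nonzero projection of $N$), so $N$ is not \v{C}eby\v{s}\"{e}v.

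If $e=1_M$, the substantial point is to locate a projection $p$ that is minimal in $N$ but \emph{not} minimal in $M$, i.e. $\dim(pMp)\ge 2$. I would argue that the diagonal matrix units cannot all be minimal in $M$: if orthogonal projections $p_1,\dots,p_m$ were minimal in $M$ and summed to $1_M$, then each $p_iMp_j$ would be at most one dimensional (by a polar-decomposition argument, since $a^*a\in p_jMp_j=\mathbb{C}p_j$ for $a\in p_iMp_j$), forcing $M=\bigoplus_{i,j}p_iMp_j$ to be finite dimensional, contrary to hypothesis. So some diagonal matrix unit $p$ has $\dim(pMp)\ge 2$, and $q:=e-p\neq 0$ because $p$ is minimal in $N$ and $\dim N>1$. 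I would then pick $x\in pMp$ with $x\notin\mathbb{C}p$ and set $d=\mathrm{dist}(x,\mathbb{C}p)>0$, attained at some $\alpha_0 p$. Compressing by $p$ gives $\|x-n\|\ge\|x-pnp\|\ge d$ for all $n\in N$ (because $pnp\in pNp=\mathbb{C}p$), so $\mathrm{dist}(x,N)=d$; and $\alpha_0 p$ and $\alpha_0 p+d\,q$ are two distinct best approximations, since $x-(\alpha_0 p+d q)$ is block diagonal across $p$ and $q$ with norm $\max(d,d)=d$.

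The hard part is the structural step in the second case: extracting a projection minimal in $N$ yet non-minimal in $M$. This is precisely where the gap between the finite dimensionality of $N$ and the infinite dimensionality of $M$ is exploited, through the observation that a finite sum of minimal projections of $M$ equal to $1_M$ would make $M$ finite dimensional. Everything else is powered by the elementary fact that the C$^*$-norm of a block diagonal element is the maximum of the norms of its blocks, which is what lets us perturb freely in the direction $q$ at no cost and thereby break uniqueness.
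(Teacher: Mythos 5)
Your proof is correct, but it takes a genuinely different route from the paper, which in fact never proves this statement directly: the paper quotes it from Robertson and later re-derives it (Corollary \ref{cor JBW*-subtriples of von Neumann algebras}) as a consequence of the general classification of \v{C}eby\v{s}\"{e}v JBW$^*$-subtriples (Theorem \ref{t Cebysev  JBW*-subtriples}), whose proof runs through Peirce-decomposition arguments (Propositions \ref{prop A} and \ref{prop B}), the identification of rank-one triples with Hilbert spaces, and the fact that a C$^*$-algebra of finite rank is reflexive and hence finite dimensional --- so a \v{C}eby\v{s}\"{e}v subalgebra of rank $\geq 2$ would force $M$ to have rank two (hence be finite dimensional) or to equal $N$. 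Your argument, by contrast, is elementary and constructive, and all its steps check out: in the case $1_N \neq 1_M$ the orthogonality of the corners $eMe$ and $(1_M-e)M(1_M-e)$ indeed gives $\|g-n\| = \max(1,\|n\|)$, so the entire closed unit ball of $N$ consists of best approximations of $g$ (note this case does not even need $\dim N>1$, consistent with the paper's Theorem \ref{t one dimensional Cebysev subspaces in a JBW*-triple}, since a projection $e\neq 1_M$ is never complete); and in the unital case your structural lemma is sound, because $p_iMp_j$ is at most one dimensional when $p_i,p_j$ are minimal in $M$ (your polar-decomposition step even avoids any von Neumann-specific fact, as $|a|=\sqrt{\lambda}\,p_j$ gives $u=\lambda^{-1/2}a\in M$ explicitly), whence $1_M$ being a finite sum of $M$-minimal projections would force $\dim M\leq m^2$. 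Your final perturbation $\alpha_0 p \mapsto \alpha_0 p + d\,q$ is exactly the mechanism the paper exploits abstractly in Propositions \ref{prop A} and \ref{prop B}: moving a best approximation in an orthogonal direction at zero cost, via the $M$-orthogonality identity \eqref{eq orthogonal are M-orthogonal}. What each approach buys: yours is short, self-contained within C$^*$-theory, and exhibits two distinct best approximations explicitly, essentially reconstructing Robertson's original argument; the paper's machinery is far heavier but applies to arbitrary weak$^*$-closed subtriples (not just $^*$-subalgebras, and with no finite-dimensionality assumption) and yields the full classification, of which this theorem is a two-line corollary.
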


A.G. Robertson and D. Yost prove in \cite[Corollary 1.4]{Robertson2} that an infinite dimensional C$^*$-algebra $A$ admits a finite dimensional $^*$-subalgebra $B$ which is also a \v{C}eby\v{s}\"{e}v in $A$ if and only if $A$ is unital and $B=\mathbb{C} 1$.\smallskip

The results proved by Robertson and Yost were complemented by G.K. Pedersen, who shows that if $A$ is a C$^*$-algebra without unit and $B$ is a \v{C}eby\v{s}\"{e}v C$^*$-subalgebra of $A$, then $A=B$ (compare \cite[Theorem 4] {Pederesen}).\smallskip

The previous results of Robertson \cite{Robertson1} and Pedersen \cite[Theorem 2]{Pederesen} also prove the following equivalent reformulation of Theorem \ref{t Robertson 1dimensional Cebysev subspaces in von Neumann algebras}: for each non-zero element $x$ in a von Neumann algebra $M$, the following statements are equivalent:\label{equ reformulation Ronbertson Pedersen}\begin{enumerate}[$(a)$] \item $\mathbb{C} x$ is a \v{C}eby\v{s}\"{e}v subspace of ${M}$;
\item $x$ is Brown-Pedersen quasi-invertible in ${M}$;
\item For each pure state (i.e. for each extreme point of the positive part of the closed unit ball of $M^*$) $\varphi\in M^*$, and for each unitary $u\in M$, we have $\varphi (x^* x) +\varphi (u x x^* u ) >0$.
\end{enumerate}

Then, the one dimensional subspace $\mathbb{C} x$ is a \v{C}eby\v{s}\"{e}v subspace of ${M}$ if and only if there is a projection $p$ in the
center of ${M}$ such that $p x$ is left invertible in $p{M}$ and $(1-p)x$ is right invertible in $(1-p){M}$.\smallskip

A renewed interest on \v{C}eby\v{s}\"{e}v subspaces of C$^*$-algebras has led M. Namboodiri, S. Pramod, and A. Vijayarajan to revisit and generalize the previous contributions of Robertson, Yost and Pedersen in \cite{Namboodiri}.\smallskip

On the other hand, C$^*$-algebras can be regarded as elements in a strictly wider class of complex Banach spaces called JB$^*$-triples (see \S 2 for the detailed definitions). Many geometric properties studied in the setting of C$^*$-algebras have been also explored in the bigger class of JB$^*$-triples. However \v{C}eby\v{s}\"{e}v subspaces and the theory of best approximations remains unexplored in the class of JB$^*$-triples. In this note we present the first results about \v{C}eby\v{s}\"{e}v subspaces and \v{C}eby\v{s}\"{e}v subtriples in Jordan structures.\smallskip

In Section \ref{sec: one dimensional Cebysev subspaces} we prove that for a non-zero element $x$ in a JBW$^*$-triple $M$, $\mathbb{C}x$ is a \v{C}eby\v{s}\"{e}v subspace of $M$ if, and only if,  $x$ is a Brown-Pedersen quasi-invertible element in ${M}$ (see Theorem \ref{t one dimensional Cebysev subspaces in a JBW*-triple}). This result generalizes the result established by Robertson in Theorem \ref{t Robertson 1dimensional Cebysev subspaces in von Neumann algebras} (cf. \cite{Robertson1}), but it also add a new perspective from an independent argument.\smallskip

In Section \ref{sec: Cebysev JBW*-subtriples} we establish a precise description of the JBW$^*$-subtriples of a JBW$^*$-triple $M$ which are
\v{C}eby\v{s}\"{e}v  subspaces in $M$. We should remark that in the setting of von Neumann algebras and C$^*$-algebras, the scarcity of non-trivial \v{C}eby\v{s}\"{e}v $^*$-subalgebras is endorsed with the following results: If an infinite dimensional von Neumann algebra, $M$, contains a finite dimensional von Neumann subalgebra $N$ which is a \v{C}eby\v{s}\"{e}v subspace in $M$, then $N$ must be one dimensional (compare Theorem \ref{t Robertson Cebysev finite dimensional von Neumann subalgebras in von Neumann algebras} or \cite[Theorem 6]{Robertson1}). Furthermore, an infinite dimensional C$^*$-algebra $A$ admits a finite dimensional $^*$-subalgebra $B$ which is also a \v{C}eby\v{s}\"{e}v in $A$ if and only if $A$ is unital and $B=\mathbb{C} 1$ (cf. \cite[Corollary 1.4]{Robertson2}). If $A$ is a C$^*$-algebra without unit and $B$ is a \v{C}eby\v{s}\"{e}v C$^*$-subalgebra of $A$, then $A=B$ (compare \cite[Theorem 4] {Pederesen}). The first main difference in the setting of JB$^*$-triples is the existence of \v{C}eby\v{s}\"{e}v JB$^*$-subtriples with arbitrary dimensions; complex Hilbert spaces and spin factors give a complete list of examples (compare Remark \ref{remark spin} and comments before it).\smallskip

In our main result about \v{C}eby\v{s}\"{e}v JBW$^*$-subtriples (cf. Theorem \ref{t Cebysev  JBW*-subtriples}), we establish the following criterium: Let $N$ be a non-zero \v{C}eby\v{s}\"{e}v JBW$^*$-subtriple of a JBW$^*$-triple $M$. Then exactly one of the following statements holds:\begin{enumerate}[$(a)$]\item $N$ is a rank one JBW$^*$-triple with dim$(N)\geq 2$ (i.e. a complex Hilbert space regarded as a type 1 Cartan factor). Moreover, $N$ may be a closed subspace of arbitrary dimension and $M$ may have arbitrary rank;
\item $N= \mathbb{C} e$, where $e$ is a complete tripotent in $M$;
\item $N$ and $M$ have rank two, but $N$ may have arbitrary dimension;
\item $N$ has rank greater or equal than three and $N=M$.
\end{enumerate}

We provide examples of infinite dimensional proper \v{C}eby\v{s}\"{e}v JBW$^*$-subtriples of JBW$^*$-triples (see Remark \ref{remark spin}). We apply the solution of the minimum covering sphere problem in the Euclidean space $\ell_2^m$ to present new examples of \v{C}eby\v{s}\"{e}v subspaces of classical Banach spaces (cf. Remark \ref{example Hilbert Cebysev in rectangular Banach spaces}), and to construct an example of a rank-one Hilbert space which is a \v{C}eby\v{s}\"{e}v JBW$^*$-subtriple of a rank-$n$ JBW$^*$-triple, where $n$ is an arbitrary natural number (cf. Remark \ref{example Hilbert Cebysev in rectangular}).\smallskip

It should be remarked at this point that the techniques applied by Robertson, Yost \cite{Robertson1,Robertson2} and Pedersen \cite{Pederesen} in the setting of von Neumann algebras do not make any sense in the wider setting of JBW$^*$-triples. The techniques developed in this paper are completely independent and provide new arguments to understand the \v{C}eby\v{s}\"{e}v von Neumann subalgebras of a von Neumann algebra (Corollary \ref{cor JBW*-subtriples of von Neumann algebras}).

\section{One-dimensional \v{C}eby\v{s}\"{e}v subspaces and subtriples of JBW$^*$-triples}\label{sec: one dimensional Cebysev subspaces}

A complex Jordan triple system is a complex linear space $E$ equipped with a triple product which is bilinear and symmetric in the external variables and conjugate linear in the middle one and satisfies the Jordan identity:
\begin{equation}\label{eq Jordan equation} L(x,y)\{a,b,c\}=\{L(x,y)a,b,c\}-\{a,L(y,x)b,c\}+\{a,b,L(x,y)c\},
\end{equation}

for all $x,y,a,b,c\in E$, where $L(x,y):E\rightarrow E$ is the linear
mapping given by $L(x,y)z=\{x,y,z\}$.\smallskip

A \emph{JB$^{*}$-triple} is a complex Jordan triple system $E$ which is a Banach
space satisfying the additional ``\emph{geometric}'' axioms:\begin{enumerate}[$(a)$]\item  For each $x\in E$, the operator $L(x,x)$ is hermitian with non-negative spectrum;
\item $\left\Vert \{x,x,x\}\right\Vert =\left\Vert x\right\Vert ^{3}$ for all $%
x\in E$.
\end{enumerate}\smallskip

Every C$^*$-algebra is a JB$^*$-triple with respect to the triple product given by \begin{equation}\label{eq ternary product on C*-algebras} \{a,b,c\} = \frac12 (a b^* c+ c b^* a).
\end{equation} Every JB$^*$-algebra (i.e. a complex Jordan Banach $^*$-algebra satisfying $$\|U_a ({a^*}) \|= \|a\|^3,$$ for every element $a$, where $U_a (x) :=2 (a\circ x) \circ a - a^2 \circ x$, cf. \cite[\S 3.8]{HancheStor}) is a JB$^*$-triple under the triple product defined \begin{equation}\label{eq JB*-alg-triple product} \J xyz = (x\circ y^*) \circ
z + (z\circ y^*)\circ x - (x\circ z)\circ y^*.
\end{equation} The space $B(H,K)$ of all bounded linear operators between complex Hilbert spaces, although rarely is a C$^*$-algebra, is a JB$^*$-triple with the product defined in \eqref{eq ternary product on C*-algebras}. In particular, every complex Hilbert space is a JB$^*$-triple.\smallskip

Other examples of JB$^*$-triples are given by the so-called \emph{Cartan factors}. A Cartan factor of type 1 is a JB$^*$-triple which coincides with the Banach space $B(H, K)$ of bounded linear operators between two complex Hilbert spaces, $H$ and $K$, where the triple product is defined by \eqref{eq ternary product on C*-algebras}. Cartan factors of types 2 and 3 are JB$^*$-triples which can be identified the subtriples of $B(H)$ defined by $II^{\mathbb{C}} = \{ x\in B(H) : x=- j x^* j\} $ and $III^{\mathbb{C}} = \{ x\in B(H) : x= j x^* j\}$, respectively, where $j$ is a conjugation on $H$. A Cartan factor of type 4 or $IV$ is a spin factor, that is, a complex Hilbert space provided with
a conjugation $x \mapsto \overline{x}$, where the triple product and the norm are defined by $$\J x y z
= \langle x / y \rangle z + \langle z / y \rangle x - \langle x /
\bar z \rangle \bar y,$$ and $\| x\|^2=\langle x / x
\rangle+\sqrt {\langle x / x \rangle^2-|\langle x / \overline x
\rangle|^2}$, respectively. The Cartan factors of types 5 and 6 consist of finite dimensional spaces of matrices over the eight dimensional complex Cayley division algebra $\mathbb{O}$; the type $VI$ is the space of all hermitian $3$x$3$ matrices over $\mathbb{O}$, while the type $V$ is the subtriple of $1$x$2$ matrices with entries in $\mathbb{O}$ (compare \cite{Loos77}, \cite{FriRu86}, and \cite[\S 2.5]{Chu}).\smallskip

A JB$^{*}$-triple $W$ is called a \emph{JBW$^{*}$-triple} if it has a predual $W_{\ast }$. It is known that a JBW$^*$-triple admits a unique isometric predual and its triple product is separately $\sigma (W,W_{\ast })$-continuous (see \cite{Barton}). The second dual $E^{\ast \ast }$ of a JB$^{*}$-triple $E$ is a
JBW$^{*}$-triple with respect to a triple product which extends the triple product of $E$ (cf. \cite{Di86b}).\smallskip

For more detail of the properties of JB$^{*}$-triples and JBW$^{*}$-triples the reader is referred to the monographs \cite{Chu} and \cite{CabRod2014}.\smallskip

Given an element $a$ in a JB$^{*}$-triple $E$, the symbol $Q(a)$ will denote the conjugate linear operator on $E$ defined by $Q(a)(x)=\{a,x,a\}$.\smallskip

An element $e\in E$ is called \textit{a tripotent} when $\{e,e,e\}=e$. Each tripotent $e\in E$ induces a decomposition of $E$, called \textit{the Peirce decomposition,} in the form $E=E_{2}(e)\oplus
E_{1}(e)\oplus E_{0}(e)$, where $E_{i}(e)$ is the $\frac{i}{2}$ eigenspace of the operator $L(e,e)$, $i=0,1,2$. This decomposition satisfies the following \emph{Peirce rules:} $$\{E_{2}(e),E_{0}(e),E\}=\{%
E_{0}(e),E_{2}(e),E\}=0$$ and $$\{E_{i}(e),E_{j}(e),E_{k}(e)\}\subseteq E_{i-j+k}(e),$$ when $i-j+k\in \{0,1,2\}$ and is zero otherwise. The projection $P_{k}(e)$ of $E$ onto $E_{k}(e)$ is called the \textit{Peirce} $k$\textit{-projection}. It is known that Peirce projections are contractive (cf. \cite[Corollary 1.2]{FridRusso}) and satisfy: $$P_{2}(e)=Q(e)^{2},\ P_{1}(e)=2(L(e,e)-Q(e)^{2}),$$ and  $$ P_{0}(e)=Id_{E}-2L(e,e)+Q(e)^{2} .$$

The separate weak$^*$-continuity of the triple product of a JBW$^*$-triple $M$ implies that Peirce projections associated with a tripotent $e$ in $M$ are weak$^*$-continuous.\smallskip

It is known that the Peirce-2 subspace $E_{2}(e)$ is a JB$^{*}$-algebra with unit $e$, Jordan product $x\circ _{e}y:=\{x,e,y\}$ and involution $x^{\ast _{e}}:=\{e,x,e\}$, respectively. Since surjective linear isometries and triple isomorphisms on a JB$^*$-triple coincide (cf. \cite[Proposition 5.5]{Ka83}), the triple product in $E_{2}(e)$ is uniquely given by $$\{x,y,z\}=(x\circ _{e}y^{\ast _{e}})\circ _{e}z+(z\circ _{e}y^{\ast
_{e}})\circ _{e}x-(x\circ _{e}z)\circ _{e}y^{\ast _{e}},$$ $x,y,z\in E_{2}(e)$.
\smallskip

We shall make use of the following property: given a tripotent $e\in E$ and an element $\lambda$ in the unit sphere of $\mathbb{C}$, the mapping: \begin{equation}\label{eq mapping Slambda (e)} S_{\lambda} (e) : E\to E, \ S_{\lambda} (e) = \lambda^2 P_2 (e)+ \lambda P_1 (e) + P_0 (e),
 \end{equation} is a surjective linear isometry on $E$ and a triple isomorphism (compare \cite[Lemma 1.1]{FridRusso}).\smallskip

A tripotent $e\in E$ is said to be \textit{unitary} if the operator $L(e,e)$ coincides with the identity map $I_{E}$ on $E$; that is, $E_{2}(e)=E$. We shall say that $e$ is \textit{complete} or \emph{maximal} when $E_{0}(e)=E$.  When $E_2 (e) = P_{2}(e) (E) =\mathbb{C}e\neq \{0\}$, we say that $e$ is \textit{minimal}. 
\smallskip

The complete tripotents of a JB$^{\ast }$-triple $E$ coincide with the real and complex extreme points of its closed unit ball $E_{1}$ (cf. \cite[Lemma 4.1]{BraKaUp78} and \cite[Proposition 3.5]{KaUp77} or \cite[Theorem 3.2.3]{Chu}). Consequently, the Krein-Milman theorem assures that every JBW$^*$-triple admits an abundant set of complete tripotents \cite[Corollary 3.2.4]{Chu}\label{page complete trip}.\smallskip

When $a$ is an element in a JBW$^{*}$-triple $M$, the sequence $(a^{\frac{1}{2n-1}})$ converges in the weak$^{\ast }$-topology
of $M$ to a tripotent, denoted by $r(a)$, called the \textit{range tripotent of} $a$. The tripotent $r(a)$ is the smallest tripotent $e\in M$ satisfying
that $a$ is positive in the JBW$^{\ast }$-algebra $M_{2}(e)$ (see \cite[page 322]{Edwards}).\smallskip

Let $a$ be an element in a JB$^*$-triple $E$. It is known that the JB$^*$-subtriple $E_{a}$ generated by $a$, identifies with some $C_0(L)$ where $\|a\|\in L\subseteq [0,\|a\|]$ with $L\cup \{0\}$ compact (cf. \cite[1.15]{Ka83}). Moreover, there exists a triple isomorphism $\Psi: E_{a} \to C_0(L)$ such that $\Psi (a) (t) =t$. Clearly, the range tripotent $r(a)$ can be identified with the characteristic function $\chi_{_{(0,\|a\|]\cap L}}\in C_0(L)^{**}$ (see \cite[beginning of \S 2]{BuChuZa2000}).\label{page triple functional calculus}\smallskip

We recall that an element $x$ in a Jordan algebra $\mathcal{J}$ with unit $e$ is called \textit{invertible} if there exists an element $y$ such that $x\circ y=e$
and $x^{2}\circ y=x$. The element $y$ is called \textit{the inverse} of $x$, and is denoted by $x^{-1}$. Inverse of any element $x$ in a Jordan algebra $\mathcal{J}$ is unique whenever it exists. The set of all invertible elements in $\mathcal{J}$ is denoted by $\mathcal{J}^{-1}$. \smallskip

An element element $a$ in a JB$^{*}$-triple $E$ is called \textit{von Neumann regular} if and only if there exists $b\in E$ such that $$Q(a)(b)=a,\  Q(b)(a)=b,\hbox{ and } [Q(a),Q(b)]:=Q(a)Q(b)-Q(b)Q(a)=0.$$ When $a$ is von Neumann regular, the (unique) element $b\in E$ satisfying the above conditions is called \textit{the generalized} \textit{inverse of} $a$, and is denoted by $a^{\dag }$. It is known that an element $a\in E$ is von Neumann regular if, and only if, $Q(a)$ has norm-closed image if, and only if, the range tripotent $r(a)$ of $a$ lies in $E$ and $a$ is positive and invertible element of the JB$^{*}$-algebra $E_{2}(r(a))$ (compare \cite{Burgos3}). Furthermore, when $a$ is von Neumann regular, $Q(a)Q(a^{\dag })=Q(a^{\dag })Q(a)=P_{2}(r(a))$ and $L(a,a^{\dag })=L(a^{\dag },a)=L(r(a),r(a))$ \cite[page 192]{Burgos3}.\smallskip

Given a pair of elements $a,b$ in a JB$^{*}$-triple $E$, \textit{the Bergmann operator} associated to $a$ and $b$ is the mapping $B(a,b):E \rightarrow L(E)$ defined by $B(a,b)=Id_{E}-2L(a,b)+Q(a)Q(b)$ (cf. \cite[page 22]{Chu}).\smallskip

An element $a$ in a JB$^{*}$-triple $E$ is said to be \textit{Brown-Pedersen quasi-invertible} (\emph{BP-quasi-invertible} for short) when it is von Neumann regular with generalized inverse $b$ such that the Bergman operator $B(a,b)$ vanishes; in such a case, $b$ is called \textit{the BP-quasi inverse} of $a$. The set of BP-quasi invertible elements in $E$ is denoted by $E_{q}^{-1}$ \cite{Tahlawi}. It is established in \cite{Tahlawi} that an element $a\in E$ is BP-quasi-invertible if, and only if, one of the following equivalent
statements holds:\begin{enumerate}[$(i)$]\item $a$ is von Neumann regular, and its range tripotent $r(a)$ is an
extreme point of the closed unit ball $E_{1}$ of $E$
(i.e. $r(a)$ is a complete tripotent of $E$);
\item There exists a complete tripotent $e\in E$ such that $a$ is
positive and invertible in the JB$^{\ast }$-algebras $E_{2}(e)$.
\end{enumerate}

We recall that two elements $a,b$ in a JB$^*$-triple, $E,$ are
said to be \emph{orthogonal} (written $a\perp b$) if $L(a,b) =0$.
Lemma 1 in \cite{Burgos1} shows that $a\perp b$ if and only
if one of the following nine statements holds:

\begin{equation}
\label{ref orthogo}\begin{array}{ccc}
  \J aab =0; & a \perp r(b); & r(a) \perp r(b); \\
  & & \\
  E^{**}_2(r(a)) \perp E^{**}_2(r(b));\ \ \ & r(a) \in E^{**}_0 (r(b));\ \ \  & a \in E^{**}_0 (r(b)); \\
  & & \\
  b \in E^{**}_0 (r(a)); & E_a \perp E_b & \J bba=0.
\end{array}
\end{equation}

Let $e$ be a tripotent in a JB$^*$-triple $E$. Lemma 1.3$(a)$ in \cite{FridRusso} shows that $$\|x_2 + x_0\| = \max \{ \|x_2\|, \|x_0\| \},$$ for every $x_2\in E_2 (e) $ and every $x_0\in E_0 (e).$ Combining this result with the equivalences in \eqref{ref orthogo} we see that \begin{equation}\label{eq orthogonal are M-orthogonal} \|a + b\|  = \max \{ \|a\|, \|b\| \},
\end{equation} whenever $a$ and $b$ are orthogonal elements in a JB$^*$-triple.\smallskip

Given a subset $M\subseteq E$, we write $M_{E}^{\perp }$
(or simply $M^{\perp }$) for the (orthogonal) annihilator of $M$ defined by
$M_{E}^{\perp }=\{y\in E:y\perp x,\forall x\in M\}$. If $e\in E$ is a tripotent,
then $\{e\}^{\perp }=$ $E_{0 }(e)$, and $\{a\}^{\perp }=$ $(E^{\ast \ast })_{0 }(r(a))\cap
E$, for every $a\in E$ (cf. \cite[Lemma 3.2]{Burgos2}).\smallskip

\begin{lemma}\label{l Cebysev implies BP-q invert} Let $V$ be a non-zero \v{C}eby\v{s}\"{e}v subspace of a JBW$^{\ast }$-triple $M$. Then $V\cap M_{q}^{-1}\neq \emptyset$, where $M_{q}^{-1}$ denotes the set of BP-quasi invertible elements of $M$.
\end{lemma}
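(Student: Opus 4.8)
The plan is to argue by contradiction: I assume that \emph{no} element of $V$ is Brown--Pedersen quasi-invertible and I manufacture a point of $M$ possessing more than one best approximation in $V$, contradicting the Čeby\v{s}\"{e}v hypothesis. First I would record the trivial reductions: a Čeby\v{s}\"{e}v subspace is norm-closed, and if $V=M$ the statement is immediate, since every JBW$^*$-triple contains complete tripotents and these are BP-quasi-invertible (their own range tripotent is complete and they are invertible in $M_2(e)$). So I may assume $V$ is a proper closed subspace. Fix a norm-one $v\in V$ with range tripotent $e=r(v)\in M$. By the characterization of BP-quasi-invertibility recalled above, the global assumption splits every nonzero $v\in V$ into one of two regimes, and it suffices to reach a contradiction from whichever is available: regime \textbf{(A)}, some $v\in V$ has a non-complete range tripotent (so $M_0(e)$ is a nonzero weak$^*$-closed JBW$^*$-subtriple); or regime \textbf{(B)}, every nonzero $v\in V$ has a complete range tripotent but, failing BP-quasi-invertibility, none is von Neumann regular, i.e. $0$ lies non-isolated in the triple spectrum of $v$ inside $M_v\cong C_0(L)$.

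The usable mechanism is clearest in regime \textbf{(A)}. Using the abundance of complete tripotents, I pick a norm-one tripotent $u\in M_0(e)$; then $u\perp v$ because $M_2(e)\perp M_0(e)$, and since orthogonality survives scalar multiplication, \eqref{eq orthogonal are M-orthogonal} gives $\|u-\lambda v\|=\max\{1,|\lambda|\}$ for all $\lambda$. Thus the entire segment $\{\lambda v:|\lambda|\le 1\}\subseteq V$ lies at distance exactly $1$ from $u$. Consequently, the moment I can certify $\hbox{dist}(u,V)=\|u\|=1$, the points $0$ and $v$ (indeed the whole segment) are distinct best approximations of $u$, contradicting uniqueness. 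The entire difficulty is therefore displaced onto computing the distance from $u$ to the \emph{full} subspace $V$, not merely to the line $\mathbb{C}v$. To get the lower bound I would exploit that the Peirce-two projection $P_2(u)$ is contractive, so $\hbox{dist}(u,V)\ge \|P_2(u)(u)-P_2(u)(v')\|$ for every $v'\in V$; hence it is enough to arrange that $P_2(u)$ annihilates $V$, i.e. $V\subseteq M_1(u)\oplus M_0(u)$, for then $\hbox{dist}(u,V)\ge\|P_2(u)(u)\|=1$ and equality follows.

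I expect the main obstacle to be precisely this selection step: I must produce an orthogonal tripotent $u$ whose ``corner'' $M_2(u)$ is entirely missed by $V$ while still $V\cap M_0(u)\neq\{0\}$, and this is exactly the point at which the hypothesis ``no element of $V$ is BP-quasi-invertible'' has to be converted into Peirce geometry. I would attack it by a maximality (Zorn) argument, enlarging $u$ inside the subtriple orthogonal to a suitable portion of $V$ and using that the orthogonal complement $V^{\perp}=\bigcap_{a\in V}\{a\}^{\perp}$ is a weak$^*$-closed subtriple (each $\{a\}^{\perp}=(M^{**})_0(r(a))\cap M$), together again with the orthogonality norm identity \eqref{eq orthogonal are M-orthogonal}, so as to push the Peirce-two presence of $V$ off $u$. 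Finally, regime \textbf{(B)} must be treated separately, because $r(v)$ complete forces $M_0(r(v))=\{0\}$ and kills the exact orthogonality above; here I would run a limiting argument inside $M_v\cong C_0(L)$, truncating the part of $v$ near $0$ to get $g_\varepsilon$ with $\|g_\varepsilon-v\|=\varepsilon$ and a norm-one tripotent $u_\varepsilon\in M_0(r(g_\varepsilon))$, yielding \emph{approximate} flat directions $\|u_\varepsilon-\lambda v\|=\max\{1,|\lambda|\}+O(\varepsilon|\lambda|)$, and then pass to a weak$^*$-limit to recover genuine non-uniqueness. In both regimes the genuine crux is the same—realizing the distance to the whole subspace via a corner of $M$ untouched by $V$—and the Zorn/Peirce-selection step is the part I expect to demand the real work.
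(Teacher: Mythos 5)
Your plan correctly isolates the ``flat segment'' mechanism (a point of $M$ at distance exactly $1$ from every $\lambda v$, $|\lambda|\leq 1$, kills uniqueness of best approximation), but both steps you defer are exactly where the argument fails, and the first is not just hard but unobtainable. Nothing in the standing hypothesis ``$V\cap M_{q}^{-1}=\emptyset$'' pushes $V$ off the Peirce-two corner of any tripotent: $V$ is merely a subspace, not a subtriple, so it can meet $M_2(u')$ nontrivially for every candidate tripotent $u'$, and no Zorn-type enlargement can force $V\subseteq M_1(u)\oplus M_0(u)$. Hence your lower bound $\hbox{dist}(u,V)\geq \|P_2(u)(u)\|=1$ has no route to being certified, and regime (A) stalls. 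Your regime (B) sketch is also unsubstantiated: the norm is only weak$^*$-lower semicontinuous, so ``passing to a weak$^*$-limit'' of the approximate flat directions $u_\varepsilon$ does not recover genuine non-uniqueness of best approximations.

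The paper's proof avoids both obstacles by a dichotomy on the distance itself, anchored at a \emph{complete} tripotent rather than at your orthogonal tripotent $u$. Fix $x\in V$ with $\|x\|=1$; by \cite[Lemma 3.12]{Horn} choose a complete tripotent $e\geq r(x)$. The $C_0(L)$ model of $M_x$ gives $\|r(x)-\lambda x\|=1-|\lambda|\inf_{t\in L}|x(t)|$, and combining with \eqref{eq orthogonal are M-orthogonal} (applied to $e-r(x)\perp r(x)-\lambda x$ when $e\neq r(x)$; when $e=r(x)$, non-invertibility of $x$ in $M_2(r(x))$ forces $\inf_{t\in L}|x(t)|=0$) yields $\|e-\lambda x\|=1$ for all $|\lambda|\leq 1$ --- note this single computation disposes of your regime (B) with no truncation or limiting process. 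Now either $\hbox{dist}(e,V)\geq 1$, in which case $0\in V$ gives $\hbox{dist}(e,V)=1$ and the whole segment $\{\lambda x:|\lambda|\leq 1\}\subseteq V$ consists of best approximations of $e$, a contradiction; or $\hbox{dist}(e,V)<1$, and here enters the ingredient your proposal lacks: by \cite[Corollary 2.4]{Jamjoom}, any $y$ with $\|e-y\|<1$, where $e$ is a complete tripotent (i.e.\ an extreme point of the closed unit ball), is BP-quasi-invertible, so $y\in V\cap M_{q}^{-1}$, again a contradiction. This quantitative openness result is what converts the hypothesis into a usable constraint without any control on the Peirce components of $V$; it is also why the point being approximated must be the complete tripotent $e$ itself --- your $u$, complete at best in $M_0(e)$, satisfies $e\in M_0(u)$, so $u$ is not complete in $M$ and no analogue of that corollary is available at $u$.
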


\begin{proof} Arguing by contradiction, we suppose that $V\cap M_{q}^{-1}= \emptyset$.\smallskip

Let us take $x\in V$ with $\Vert x\Vert =1$. By assumptions, $x\notin M_{q}^{-1}$. Under these conditions, the range complete tripotent of $x$, $r(x)$ is not complete in $M$ or $x$ is not invertible in the JBW$^*$-algebra $M_2 (r(x))$. By \cite[Lemma 3.12]{Horn}, there exists a complete tripotent $e$
in $M$ such that $r(x)\leq e$.\smallskip

We shall identify the JB$^*$-subtriple, $M_x$, of $M$ generated by $x$ with some $C_0(L)$ where $1=\|x\|\in L\subseteq [0,\|1\|]$ with $L\cup \{0\}$ compact (cf. \cite[1.15]{Ka83}). We further know that there exists a triple isomorphism $\Psi: M_{x} \to C_0(L)$ such that $\Psi (x) (t) =t$, and the range tripotent $r(x)$ identifies with the characteristic function $\chi_{_{(0,\|x\|]\cap L}}\in C_0(L)^{**}$ (see page \ref{page triple functional calculus}). It is clear that, under this identification, $$\|r(x)-\lambda x\| =  1-|\lambda| \inf\{|x(t)|: t\in L\}\leq 1,$$ for every $|\lambda|\leq 1$ in $\mathbb{C}.$ When $e= r(x),$ the element $x$ is not invertible in the JBW$^*$-algebra $M_2 (r(x))$, and hence $\|e-x\| = \|r(x)-x\| =1$. When $e\gneqq r(x)$, we have $\|e-r(x) \|=1$. Thus, applying $e-r(x) \perp r(x)$ and \eqref{eq orthogonal are M-orthogonal}, we further known that $$\|e- \lambda x\| = \|e-r(x) + r(x) -\lambda x\| = \max \{\|e-r(x)\|, \|r(x) -\lambda x\|\} =1.$$

We observe that, since $e$ is a complete tripotent, $e\in M_q^{-1}$, and hence $e\notin V$. Since $V$ is a \v{C}eby\v{s}\"{e}v subspace, there exists a unique best approximation, $c_{_V} (e)\in V,$ of $e$ in $V$ satisfying $\hbox{dist}(e,V)=\left\Vert e- c_{_V} (e)\right\Vert >0$.\smallskip

If $\hbox{dist}(e,V)=\left\Vert e- c_{_V} (e)\right\Vert \geq 1,$ we would have $1 = \|e\|\geq \hbox{dist}(e,V)= 1,$ and
$$1=\left\Vert e- c_{_V} (e)\right\Vert= \hbox{dist}(e,V) =  \Vert e-\lambda x\Vert,$$ for every $|\lambda|\leq 1$, contradicting the uniqueness of the best approximation of $e$ in $V$. We can therefore assume that $\hbox{dist}(e,V)<1.$ Consequently, there exits $y\in V$ with $\|e-y\|<1.$ Corollary 2.4. in \cite{Jamjoom} implies that $y\in M_{q}^{-1}\cap V$, which is impossible.
\end{proof}

Let $e$ be a tripotent in a JB$^*$-triple $E$. Let us recall that $e$ is a tripotent in the JBW$^*$-triple $E^{**},$ and that Peirce projections associated with $e$ on $E^{**}$ are weak$^*$-continuous. Goldstine's theorem assures that $E$ is weak$^*$-dense in $E^{**}$, and hence, $E^{**}_k (e)$ coincides with the weak$^*$-closure of $E_k (e)$ in $E^{**}$, for every $k=0,1,2.$ In particular, $e$ is complete in $E^{**}$ whenever $e$ is a complete tripotent in $E.$ Moreover, since the orthogonal complement of a tripotent $e$ in a JB$^*$-triple $F$ coincides with $F_0 (e)$, we have:

\begin{lemma}\label{l complete tripotents do not admit orthogonal complements}
Let $e$ be a complete tripotent in a JB$^*$-triple $E$. Then $\{e\}^{\perp}_{_{E^{**}}}=\{0\}$, that is, $e$ is not orthogonal to any non-zero element in $E^{**}$.$\hfill\Box$
\end{lemma}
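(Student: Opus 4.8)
The plan is to reduce the claim to the identity $\{e\}^{\perp}_{E^{**}} = (E^{**})_0(e)$, valid because the orthogonal complement of a tripotent in any JB$^*$-triple equals its Peirce-$0$ subspace, and then to show that $e$ remains complete in $E^{**}$, so that this Peirce-$0$ subspace is trivial. As the lemma is displayed with its proof indicated by $\Box$, I expect the argument to follow immediately from the observations collected just before the statement.

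First I would apply the recalled fact that, for a tripotent $f$ in a JB$^*$-triple $F$, one has $\{f\}^{\perp} = F_0(f)$. Taking $F = E^{**}$ and $f = e$ gives $\{e\}^{\perp}_{E^{**}} = (E^{**})_0(e)$, so the whole lemma reduces to proving $(E^{**})_0(e) = \{0\}$, i.e. that $e$ is a complete tripotent of $E^{**}$.

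To establish this, I would start from $E_0(e) = \{0\}$, which is exactly the completeness of $e$ in $E$. Since the triple product of $E^{**}$ extends that of $E$, the operators $L(e,e)$ and $Q(e)$ computed in $E^{**}$ restrict to their counterparts in $E$; hence the weak$^*$-continuous Peirce-$0$ projection $P_0(e)$ on $E^{**}$ restricts to the Peirce-$0$ projection of $E$, and in particular $P_0(e)(E) = E_0(e) = \{0\}$. By Goldstine's theorem $E$ is weak$^*$-dense in $E^{**}$, so every element of $(E^{**})_0(e) = P_0(e)(E^{**})$ is a weak$^*$-limit of a net in $P_0(e)(E) = \{0\}$, forcing $(E^{**})_0(e) = \{0\}$.

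Combining the two steps yields $\{e\}^{\perp}_{E^{**}} = (E^{**})_0(e) = \{0\}$, which is the assertion. No serious obstacle arises: the only two points meriting care are that $P_0(e)$ on $E^{**}$ really does restrict to the Peirce-$0$ projection on $E$ (guaranteed by the extension property of the triple product together with the fact that Peirce projections are polynomials in $L(e,e)$ and $Q(e)$) and that a weak$^*$-limit of the constant net $0$ is again $0$; both are routine once the weak$^*$-continuity of Peirce projections noted in the text is invoked.
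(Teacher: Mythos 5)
Your proof is correct and takes essentially the same route as the paper: the authors likewise reduce the lemma to the identity $\{e\}^{\perp}_{E^{**}}=(E^{**})_0(e)$ and prove that $e$ stays complete in $E^{**}$ by observing, via Goldstine's theorem and the weak$^*$-continuity of the Peirce projections on the JBW$^*$-triple $E^{**}$, that $(E^{**})_0(e)$ is the weak$^*$-closure of $E_0(e)=\{0\}$. Your two points of care (that $P_0(e)$ on $E^{**}$ restricts to the Peirce-$0$ projection of $E$, and the weak$^*$-limit argument) are exactly the content packed into the paragraph preceding the lemma in the paper.
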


The following technical result is part of the folklore in the theory of best approximation (see \cite[Lemma 3]{Robertson1} or \cite[Theorem 2.1]{Singer}).

\begin{lemma}\label{l tech Roeberton Singer}{\rm(}\cite[Lemma 3]{Robertson1}{\rm)}. Let $x$ be an element in complex a Banach space $X$ such that $\mathbb{C}x$ is not a \v{C}eby\v{s}\"{e}v subspace of $X$. Then there exists an extreme
point $\phi $ of the closed unit ball of $X^{*}$, a vector $y\in X$ and a scalar $\lambda \in \mathbb{C}\backslash \{0\}$ such that \begin{enumerate}[$(a)$] \item $\phi (x)=0$;
\item $\phi (y)=$ $\left\Vert y\right\Vert =\left\Vert y-\lambda
x\right\Vert $. $\hfill\Box$
\end{enumerate}
\end{lemma}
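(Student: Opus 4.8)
The plan is to unwind the hypothesis that $\mathbb{C}x$ is \emph{not} a \v{C}eby\v{s}\"{e}v subspace and then manufacture the required extreme functional, vector and scalar. Since $\mathbb{C}x$ is one dimensional, hence a finite dimensional and therefore proximinal subspace, every element of $X$ has at least one best approximation in $\mathbb{C}x$; consequently the failure of the \v{C}eby\v{s}\"{e}v property is necessarily a failure of \emph{uniqueness}. So there are $y_0\in X$ and scalars $\mu_1\neq\mu_2$ for which $\mu_1 x$ and $\mu_2 x$ are two distinct best approximations of $y_0$ in $\mathbb{C}x$. Because the best approximation problem is invariant under translation by elements of $\mathbb{C}x$, I would pass to $y:=y_0-\mu_1 x$ and $\lambda:=\mu_2-\mu_1\neq 0$, so that $0$ and $\lambda x$ become two best approximations of $y$; writing $d:=\mathrm{dist}(y,\mathbb{C}x)$ this says exactly $\|y\|=\|y-\lambda x\|=d$.

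Next I would record two standard facts about the closed unit ball $B_{X^{*}}$ of $X^{*}$. For each $u\in X$ the norm-attaining set $D_u:=\{\phi\in B_{X^{*}} : \phi(u)=\|u\|\}$ is a weak$^{*}$-closed \emph{face} of $B_{X^{*}}$: from $\mathrm{Re}\,\phi(u)\leq|\phi(u)|\leq\|u\|$ one sees that $\phi(u)=\|u\|$ is precisely the condition that the real-linear weak$^{*}$-continuous functional $\phi\mapsto\mathrm{Re}\,\phi(u)$ attain its maximum $\|u\|$ on $B_{X^{*}}$, and the maximizing set of a continuous linear functional on a convex set is a face. I would also use that an intersection of faces is a face, and that every extreme point of a face $F$ of a convex set $K$ is automatically an extreme point of $K$.

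The decisive step is to apply these remarks not to the naive candidate $D_y\cap\{\phi : \phi(x)=0\}$, but to $D:=D_y\cap D_{y-\lambda x}$. Here lies the main obstacle and its resolution: intersecting the single face $D_y$ with the hyperplane $\ker(\widehat{x})$ does \emph{not} in general produce a face, so an extreme point of that intersection need not be extreme in the whole ball $B_{X^{*}}$ — which is exactly what the statement demands. The presence of \emph{two} distinct best approximations is what repairs this. Since $\|y\|=\|y-\lambda x\|=d$, every $\phi\in D$ satisfies $\phi(y)=d$ and $\phi(y-\lambda x)=d$, hence $\lambda\,\phi(x)=0$ and so $\phi(x)=0$; thus $D$ coincides with $\{\phi\in B_{X^{*}} : \phi(y)=d,\ \phi(x)=0\}$. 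A routine Hahn--Banach argument in the quotient $X/\mathbb{C}x$ (whose dual is $(\mathbb{C}x)^{\perp}$) produces a norm-one functional annihilating $x$ and taking the value $d$ at $y$, so $D\neq\emptyset$. Being the intersection of the two faces $D_y$ and $D_{y-\lambda x}$, the weak$^{*}$-compact convex set $D$ is itself a face of $B_{X^{*}}$; by Krein--Milman it has an extreme point $\phi$, and since $D$ is a face, $\phi$ is an extreme point of $B_{X^{*}}$. This $\phi$ satisfies $\phi(x)=0$, which is $(a)$, and $\phi(y)=d=\|y\|=\|y-\lambda x\|$, which is $(b)$, completing the proof. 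The only point requiring genuine care is verifying that $D$ is a face of $B_{X^{*}}$; once $D$ is realized as an intersection of two norm-attaining faces, this is immediate, and the rest is bookkeeping.
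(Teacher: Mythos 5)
Your proof is correct and complete, modulo the one-line observation that $x\neq 0$ (otherwise $\mathbb{C}x=\{0\}$ is trivially a \v{C}eby\v{s}\"{e}v subspace), which guarantees $\hbox{dist}(y,\mathbb{C}x)>0$ so that your Hahn--Banach step in the quotient $X/\mathbb{C}x$ applies. The paper itself gives no proof, quoting the lemma as folklore from \cite[Lemma 3]{Robertson1} and \cite[Theorem 2.1]{Singer}, and your argument --- realizing $D_y\cap D_{y-\lambda x}$ as an intersection of two weak$^*$-closed norm-attaining faces of $B_{X^{*}}$, so that a Krein--Milman extreme point of it is extreme in $B_{X^{*}}$ and automatically annihilates $x$ --- is precisely the standard argument behind those citations, including the correct identification of the key subtlety (an extreme point of $D_y\cap\ker(\widehat{x})$ alone need not be extreme in the ball).
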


We can characterize now the one dimensional \v{C}eby\v{s}\"{e}v subspaces of a JBW$^*$-triple.

\begin{thm}\label{t one dimensional Cebysev subspaces in a JBW*-triple}
Let $x$ be a non-zero element in a JBW$^*$-triple $M$. The following statements are equivalent:
\begin{enumerate}[$(a)$] \item $\mathbb{C}x$ is a \v{C}eby\v{s}\"{e}v subspace of
$M$;
\item $x$ is a Brown-Pedersen quasi-invertible element in ${M}$;
\end{enumerate}
\end{thm}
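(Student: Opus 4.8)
The plan is to prove the two implications separately, using the three preceding lemmas as the engine. For $(a)\Rightarrow(b)$ I would simply apply Lemma~\ref{l Cebysev implies BP-q invert} to the non-zero one-dimensional subspace $V=\mathbb{C}x$, which yields a scalar $\mu$ with $\mu x\in M_q^{-1}$. Since $0$ is never BP-quasi-invertible (its range tripotent is $0$, which is not complete), necessarily $\mu\neq 0$, and I would conclude $x=\mu^{-1}(\mu x)\in M_q^{-1}$ by noting that $M_q^{-1}$ is stable under multiplication by non-zero scalars. The latter follows from the characterization recalled before the lemma: von Neumann regularity is clearly preserved under scaling, and, writing $\mu=|\mu|\,\nu$ with $|\nu|=1$, the range tripotent transforms as $r(\mu x)=\nu\, r(x)$ (positive scaling does not change the range tripotent and the triple automorphism $z\mapsto\nu z$ commutes with $r$); since $z\mapsto\nu z$ carries complete tripotents to complete tripotents, $r(\mu x)$ is complete precisely when $r(x)$ is.

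For the converse $(b)\Rightarrow(a)$ I would argue by contradiction. Assuming $\mathbb{C}x$ is not \v{C}eby\v{s}\"{e}v, Lemma~\ref{l tech Roeberton Singer} furnishes an extreme point $\phi$ of the closed unit ball of $M^*$, a vector $y\in M$, and a scalar $\lambda\neq 0$ with $\phi(x)=0$ and $\phi(y)=\|y\|=\|y-\lambda x\|$. Since $\|y-\lambda x\|>0$ one has $y\neq 0$, and after rescaling I may assume $\|y\|=\phi(y)=1$. Because $\phi(x)=0$ it follows that $\phi(y-\lambda x)=1=\|y-\lambda x\|$ as well, so both $y$ and $y-\lambda x$ are norm-one elements on which $\phi$ attains its norm.

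The key input is the structure theory of functionals on JB$^*$-triples applied to $M$ (regarded merely as a JB$^*$-triple, so that its dual $M^*$ carries support tripotents in $M^{**}$): the extreme point $\phi$ has a minimal support tripotent $u\in M^{**}$, characterized by $\phi=\phi\circ P_2(u)$ and $\phi(u)=\|\phi\|=1$, and every norm-one $z\in M^{**}$ with $\phi(z)=1$ satisfies $P_2(u)(z)=u$ and $P_1(u)(z)=0$ (equivalently, the exposed face of the unit ball determined by $\phi$ is $u+(M^{**}_0(u))_1$). I would apply this to $z=y$ and to $z=y-\lambda x$; subtracting the two resulting identities and using $\lambda\neq 0$ gives $P_2(u)(x)=0$ and $P_1(u)(x)=0$, so that $x\in M^{**}_0(u)$, i.e. $x\perp u$.

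Finally, since $x$ is BP-quasi-invertible, $e:=r(x)$ is a complete tripotent of $M$, hence of $M^{**}$. From $x\perp u$ and the orthogonality equivalences in \eqref{ref orthogo} (with $r(u)=u$) I obtain $e\perp u$, so $u\in\{e\}^{\perp}_{M^{**}}$; but Lemma~\ref{l complete tripotents do not admit orthogonal complements} gives $\{e\}^{\perp}_{M^{**}}=\{0\}$, forcing $u=0$, which contradicts the non-triviality of the minimal support tripotent. This contradiction shows $\mathbb{C}x$ is \v{C}eby\v{s}\"{e}v. The hard part, and the only place where genuine JB$^*$-triple machinery beyond the quoted lemmas enters, is the description of the exposed face associated with an extreme functional — especially the vanishing of the Peirce-$1$ component of every norm-attaining element — since this is exactly what pins $x$ into the Peirce-$0$ space of $u$ and thereby makes $x\perp u$ collide with the completeness of $r(x)$.
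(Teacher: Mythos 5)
Your proposal is correct and takes essentially the same route as the paper: $(a)\Rightarrow(b)$ via Lemma \ref{l Cebysev implies BP-q invert}, and $(b)\Rightarrow(a)$ by the identical contradiction argument through Lemma \ref{l tech Roeberton Singer}, the minimal support tripotent $u=s(\phi)$ of the extreme functional (the Friedman--Russo results the paper cites give precisely $P_2(u)(z)=u$ and $P_1(u)(z)=0$ for norm-one norm-attaining $z$), forcing $x\in M^{**}_0(u)$, hence $r(x)\perp u$, contradicting Lemma \ref{l complete tripotents do not admit orthogonal complements}. Your only real addition is the explicit (and correct) check that $M_{q}^{-1}$ is stable under multiplication by non-zero scalars, a detail the paper leaves implicit when passing from $\mathbb{C}x\cap M_{q}^{-1}\neq\emptyset$ to $x\in M_{q}^{-1}$.
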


\begin{proof} The implication $(a)\Rightarrow (b)$ follows from Lemmas \ref{l Cebysev implies BP-q invert}.\smallskip

$(b)\Rightarrow (a)$ Suppose $x$ is BP-quasi invertible in $M.$ We note that the support tripotent, $r(x),$ of $x$ is complete in $M$, and hence a complete tripotent in $M^{\ast \ast }$ (cf. Lemma \ref{l complete tripotents do not admit orthogonal complements} and comments before it).\smallskip

Suppose that $\mathbb{C}x$ is not a \v{C}eby\v{s}\"{e}v subspace of $M$. By Lemma \ref{l tech Roeberton Singer} there exists an extreme point $\phi$ of the closed unit ball of $M^{\ast }$, $\lambda \in \mathbb{C}\backslash \{0\}$, and $y\in M$ such that $\phi (x)=0$ and $\phi (y)=\Vert y\Vert =\Vert y-\lambda x\Vert$.\smallskip

The support tripotent $\upsilon =s(\phi )$ of $\phi $ in $M^{\ast \ast }$ is a (non-zero) minimal tripotent in $M^{**}$ satisfying $\phi =P_{2}(\upsilon )^* \phi =\phi  P_{2}(\upsilon )$ and $\phi (z) \upsilon
=P_{2}(\upsilon ) (z)$, $\forall z\in M^{\ast \ast }$ (cf. \cite[Proposition 4]{FridRusso}). Therefore, $P_{2}(\upsilon )(x)=\phi (x)\upsilon =0$.\smallskip

We may suppose that $\Vert y\Vert =1$. Since $P_{2}(\upsilon )(y)=\phi (y) \upsilon =\upsilon $, Lemma 1.6 in \cite{FridRusso} implies that $P_{1}(\upsilon )(y)=0$, which shows that $y=\upsilon +P_{0}(\upsilon )y$. We similarly get $P_{1}(\upsilon )(y-\lambda x)=0$ (we simply observe that $\phi(y-\lambda x) = \|y\|= \|y-\lambda x\|=1$). Therefore, $P_{1}(\upsilon )(x)=0$, and  $x=P_{0}(\upsilon )x\in (M^{\ast \ast })_{0}(\upsilon
)=((M^{\ast \ast })_{2}(\upsilon ))^{\perp },$ implying that $x\perp \upsilon$. The equivalent statements in \eqref{ref orthogo} prove that $r(x)\perp \upsilon $, which contradicts Lemma \ref{l complete tripotents do not admit orthogonal complements}.\smallskip
\end{proof}

The above Theorem \ref{t one dimensional Cebysev subspaces in a JBW*-triple} generalizes the previously commented results obtained by Robertson \cite{Robertson1} (compare Theorem \ref{t Robertson 1dimensional Cebysev subspaces in von Neumann algebras}). In order to find a triple version of the reformulation established by Pedersen in \cite[Theorem 2]{Pederesen}, stated as statement $(c)$ in page \pageref{equ reformulation Ronbertson Pedersen}, we recall some notation.\smallskip

For each functional $\varphi$ in the predual of a JBW$^*$-triple $W$, and for each $z$ in $W$ with $\varphi (z) = \|\varphi\|$, and $\|z\|=1$, the mapping $x\mapsto\|x\|_{\varphi}:=(\varphi\{x,x,z\})^{1/2}$ defines a pre-Hilbertian semi-norm on $W$. Moreover, $\varphi\{x,x,w\}=\varphi\{x,x,z\}$ whenever $w\in W$ with $\varphi(w)=\|\varphi\|$ and $\|w\|=1$ (cf. \cite[Proposition 1.2]{BarFri1987}). It is known that \begin{equation}\label{ineq seminorm and module} |\varphi (x)| \leq \|x\|_{\varphi},
 \end{equation} for every $x\in W$ (see \cite[page 258]{BarFri90}). \smallskip

The inequality in \eqref{ineq seminorm and module} together with Lemma \ref{l tech Roeberton Singer} imply the following property: Let $x$ be a non-zero element in a JBW$^*$-triple $M$ such that $\mathbb{C}x$ is a \v{C}eby\v{s}\"{e}v subspace of $M$. Then for each extreme point $\varphi$ of the closed unit ball of $M^*$ we have $\|x \|_{\varphi}\gneqq 0$. It would be interesting to know under what additional hypothesis, the condition $\|x \|_{\varphi}\gneqq 0,$ for every extreme point $\varphi$ of the closed unit ball of $M^*$, implies that $x$ is BP-quasi invertible.

\section{\v{C}eby\v{s}\"{e}v subtriples of JBW$^*$-triples}\label{sec: Cebysev JBW*-subtriples}

In this section, we shall determine the JBW$^*$-subtriples of a JBW$^*$-triple $M$ which are \v{C}eby\v{s}\"{e}v subspaces in $M$. Let us recall that in the case of an infinite dimensional von Neumann algebra $M$, if a finite dimensional von Neumann subalgebra $N$ of $M$ is a \v{C}eby\v{s}\"{e}v subspace in $M$ then $N$ must be one dimensional (compare Theorem \ref{t Robertson Cebysev finite dimensional von Neumann subalgebras in von Neumann algebras} or \cite[Theorem 6]{Robertson1}). Furthermore, an infinite dimensional C$^*$-algebra $A$ admits a finite dimensional $^*$-subalgebra $B$ which is also a \v{C}eby\v{s}\"{e}v in $A$ if and only if $A$ is unital and $B=\mathbb{C} 1$ (cf. \cite[Corollary 1.4]{Robertson2}).  The scarcity of non-trivial \v{C}eby\v{s}\"{e}v C$^*$-subalgebras in general C$^*$-algebras can be better understood with the following result due to G.K. Pedersen: If $A$ is a C$^*$-algebra without unit and $B$ is a \v{C}eby\v{s}\"{e}v C$^*$-subalgebra of $A$, then $A=B$ (compare \cite[Theorem 4] {Pederesen}). \smallskip

The first main difference in the setting of JB$^*$-triples is the existence of \v{C}eby\v{s}\"{e}v JB$^*$-subtriples with arbitrary dimensions. For example, let $E=H$ be a complex Hilbert space regarded as a type 1 Cartan factor with the Hilbert norm and the product \begin{equation}\label{eq triple product Hilbert type 1} \{x,y,z\} = \frac12 (\langle x,y\rangle z+ \langle z,y\rangle x ),
 \end{equation} where $\langle .,.\rangle$ denotes the inner product of $H$. It is known that elements in the unit sphere of a complex Hilbert $H$ space regarded as a type 1 Cartan factor are precisely the complete tripotents of $H$. The \emph{Orthogonal Projection theorem} tells that any closed subspace of $H$ is a \v{C}eby\v{s}\"{e}v  subspace of $H$ and clearly a JB$^*$-subtriple.\smallskip

The following remark provides an additional example.

\begin{remark}\label{remark spin}{\rm Let $E$ be a spin factor with triple product and norm given by $$\J x y z
= \langle x / y \rangle z + \langle z / y \rangle x - \langle x /
\bar z \rangle \bar y,$$ and $\| x\|^2=\langle x / x
\rangle+\sqrt {\langle x / x \rangle^2-|\langle x / \overline x
\rangle|^2}$, respectively, where $x \mapsto \overline{x}$ is a conjugation on $E$, and $\langle ./ .\rangle$ denotes the inner product of $E$. Let $K$ be a closed subspace of $E$ with $\overline{K} = K$. Clearly, $K$ is a JB$^*$-subtriple of $E$. Since $K$ is a closed subspace of the complex Hilbert space $E$, there exists an orthogonal projection $P$ of $E$ onto $K$. Since $E = K\bigoplus H$, where $H= (I-P) (E)$ with $\langle K / H \rangle =0$. Since $\overline{K} = K$, we also have $\overline{H} = H$. Given $\eta\in K$ and $\xi\in H$, it is easy to check that $$\| \eta+\xi\|^2 = \langle \eta+\xi / \eta+\xi \rangle + \sqrt{\langle \eta+\xi / \eta+\xi \rangle^2-|\langle \eta+\xi / \overline{\eta}+\overline{\xi}
\rangle|^2 }$$ $$ = \langle \eta / \eta \rangle + \langle \xi / \xi \rangle + \sqrt{\langle \eta / \eta \rangle^2-|\langle \eta / \overline{\eta} \rangle|^2  + \langle \xi / \xi \rangle^2-|\langle \xi / \overline{\xi}
\rangle|^2} $$ $$\geq \langle \eta / \eta \rangle +\sqrt{ \langle \eta / \eta \rangle^2-|\langle \eta / \overline{\eta} \rangle|^2} = \|\eta\|^2.$$ Moreover, $\| \eta+\xi\| = \| \eta\| $ if and only if $\xi=0$. This shows that $P: E\to E$ is a bi-contractive for the norm $\|.\|$, and for each $x\in E$, $P(x)$ is the unique best approximation of $x$ in $K$. Therefore, $K$ is a \v{C}eby\v{s}\"{e}v  JB$^*$-subtriple of $E$. We observe that the dimensions of $E$ and $K$ can be arbitrarily big.
}\end{remark}

We can present now our conclusions on \v{C}eby\v{s}\"{e}v  JB$^*$-subtriples.\smallskip

The next property of \v{C}eby\v{s}\"{e}v subspaces is probably part of the folklore in the theory of best approximation in normed spaces, but we couldn't find an exact reference.

\begin{lemma}\label{l contractive projection} Let $V$ be a \v{C}eby\v{s}\"{e}v subspace of a normed space $X$. For each $x\in X$, we denote by $c_{_{V}} (x)$ the unique element in $V$ satisfying $\|x-c_{_V}(x)\| = \hbox{dist} (x,V)$. Let $P: X\to X$ be a contractive projection such that $P(V) \subseteq V$. Then $$P\Big( c_{_V}(P(x)) \Big) =  c_{_V}(P(x)),$$ for every $x\in X$. Furthermore, $P(V)$ is a \v{C}eby\v{s}\"{e}v subspace of the normed space $P(X),$ and for each $x\in X,$ $c_{_{P(V)}} (P(x)) = P(c_{_V} (x)).$
\end{lemma}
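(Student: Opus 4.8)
The plan is to prove the three assertions in turn, relying only on the idempotency and contractivity of $P$ together with the uniqueness of nearest points in a \v{C}eby\v{s}\"{e}v subspace. First I would fix $x\in X$, abbreviate $u=P(x)$ (so that $P(u)=u$), and establish the fixed-point relation $P\big(c_{_V}(u)\big)=c_{_V}(u)$. Since $c_{_V}(u)\in V$ and $P(V)\subseteq V$, the point $P(c_{_V}(u))$ lies in $V$, and contractivity together with $P(u)=u$ give $\|u-P(c_{_V}(u))\|=\|P(u-c_{_V}(u))\|\leq\|u-c_{_V}(u)\|=\hbox{dist}(u,V)$. As $P(c_{_V}(u))\in V$, this inequality is forced to be an equality, so $P(c_{_V}(u))$ is a nearest point to $u$ in $V$, and the \v{C}eby\v{s}\"{e}v property of $V$ yields $P(c_{_V}(u))=c_{_V}(u)$, that is, $P\big(c_{_V}(P(x))\big)=c_{_V}(P(x))$.

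Next I would deduce that $P(V)$ is a \v{C}eby\v{s}\"{e}v subspace of $P(X)$ and that $c_{_{P(V)}}(u)=c_{_V}(u)$ for each $u\in P(X)$. Indeed, for such a $u=P(u)$ the previous step gives $c_{_V}(u)=P(c_{_V}(u))\in P(V)$; since $P(V)\subseteq V$ one has $\hbox{dist}(u,V)\leq\hbox{dist}(u,P(V))\leq\|u-c_{_V}(u)\|=\hbox{dist}(u,V)$, so equality holds throughout and $c_{_V}(u)$ is a nearest point to $u$ in $P(V)$. Any competing nearest point $w\in P(V)\subseteq V$ would satisfy $\|u-w\|=\hbox{dist}(u,V)$ and hence equal $c_{_V}(u)$ by uniqueness in $V$; this delivers both existence and uniqueness, so $P(V)$ is \v{C}eby\v{s}\"{e}v in $P(X)$ and $c_{_{P(V)}}(P(x))=c_{_V}(P(x))$.

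Finally, to reach the stated identity $c_{_{P(V)}}(P(x))=P(c_{_V}(x))$ I would show that $P(c_{_V}(x))$ is the nearest point to $P(x)$ in $P(V)$. It clearly belongs to $P(V)$, and contractivity gives $\|P(x)-P(c_{_V}(x))\|=\|P(x-c_{_V}(x))\|\leq\|x-c_{_V}(x)\|=\hbox{dist}(x,V)$. The hard part will be to confirm that this value coincides with $\hbox{dist}(P(x),P(V))$, i.e. that the contractive projection $P$ carries the nearest point of $x$ in $V$ onto the nearest point of $P(x)$ in $P(V)$; once this is pinned down, the uniqueness of best approximation in the \v{C}eby\v{s}\"{e}v subspace $P(V)$ forces $P(c_{_V}(x))=c_{_{P(V)}}(P(x))$, which is precisely the asserted formula. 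I expect this last identification to be the main obstacle, since it is exactly here that the interaction between the metric projection onto $V$ and the contractive projection $P$ must be controlled, whereas the first two steps are routine manipulations with contractivity and the uniqueness of nearest points.
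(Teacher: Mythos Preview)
Your proof of the first assertion is correct and matches the paper's argument exactly; the paper then simply writes ``The rest is clear.'' Your treatment of the second assertion (that $P(V)$ is \v{C}eby\v{s}\"{e}v in $P(X)$, with $c_{_{P(V)}}(P(x))=c_{_V}(P(x))$ for every $x\in X$) is also correct and more detailed than what the paper offers.

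The obstacle you flag in the final step is genuine, and in fact cannot be overcome: the identity $c_{_{P(V)}}(P(x))=P(c_{_V}(x))$ for \emph{arbitrary} $x\in X$ is false as stated. Take $X=(\mathbb{R}^2,\|\cdot\|_\infty)$, $V=\{(t,t):t\in\mathbb{R}\}$ (which is \v{C}eby\v{s}\"{e}v with $c_{_V}(a,b)=\big(\tfrac{a+b}{2},\tfrac{a+b}{2}\big)$), and $P(a,b)=(a,a)$; then $P$ is a linear contractive projection with $P(V)=V\subseteq V$. For $x=(1,3)$ one computes $c_{_V}(x)=(2,2)$, hence $P(c_{_V}(x))=(2,2)$, whereas $P(x)=(1,1)$ gives $c_{_{P(V)}}(P(x))=c_{_V}(1,1)=(1,1)$. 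What \emph{is} true---and what you proved---is $c_{_{P(V)}}(P(x))=c_{_V}(P(x))$; equivalently, the stated formula holds whenever $x\in P(X)$. Fortunately only the first assertion of the lemma is used in the sequel (in Propositions~\ref{prop A} and~\ref{prop B}, each time with an element $b$ satisfying $P(b)=b$), so this inaccuracy in the statement does not affect the paper's main results.
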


\begin{proof} Let $x$ be an element in $X$. The condition $\|P\|\leq 1$ implies that $$\Big\|P(x) - P\Big( c_{_V}(P(x)) \Big) \Big\| \leq \Big\|P(x) - c_{_V}\Big(P(x)\Big) \Big\| =  \hbox{dist} (P(x),V).$$ The element $P\Big( c_{_V}(P(x)) \Big)\in P(V) \subseteq V$. Thus, the uniqueness of the best approximation in $V$ proves that $P\Big( c_{_V}(P(x)) \Big) =  c_{_V}(P(x))$. The rest is clear.
\end{proof}

\begin{prop}\label{prop A} Let $F$ be a \v{C}eby\v{s}\"{e}v  JB$^*$-subtriple of a JB$^*$-triple $E$. Suppose $e$ is a non-zero tripotent in $F$. Then $E_0 (e) = F_0 (e)$. Consequently, every complete tripotent in $F$ is complete in $E$.
\end{prop}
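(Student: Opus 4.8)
The plan is to prove the two inclusions of $E_0(e)=F_0(e)$ separately, the content lying entirely in one of them. The inclusion $F_0(e)\subseteq E_0(e)$ is immediate: since $e\in F$ and $F$ is a subtriple, the operators $L(e,e)$ and $Q(e)$ leave $F$ invariant, hence so do the Peirce projections $P_k(e)$, which gives $F_k(e)=F\cap E_k(e)$ for $k=0,1,2$, and in particular $F_0(e)\subseteq E_0(e)$. The whole work is therefore the reverse inclusion $E_0(e)\subseteq F$, which I would establish by showing that every $z\in E_0(e)$ coincides with its unique best approximation in $F$.

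Fix $z\in E_0(e)$, write $c=c_{F}(z)$ for its best approximation in $F$, and put $d=\mathrm{dist}(z,F)=\|z-c\|$. The first and crucial step is to \emph{localize} the foot $c$ inside $E_0(e)$. For this I would exploit the surjective isometries $S_{\lambda}(e)$ of \eqref{eq mapping Slambda (e)} with $|\lambda|=1$: each $S_{\lambda}(e)$ fixes $z$ (because $z\in E_0(e)$) and maps $F$ into $F$ (because $e\in F$ and the Peirce projections preserve $F$), so $S_{\lambda}(e)(c)\in F$ with $\|z-S_{\lambda}(e)(c)\|=\|z-c\|=d$; uniqueness of best approximation then forces $S_{\lambda}(e)(c)=c$. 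Taking $\lambda=i$ yields $P_2(e)(c)=P_1(e)(c)=0$, that is $c\in E_0(e)$, whence $c\in F_0(e)$ and the residual $r:=z-c$ also lies in $E_0(e)$.

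With the foot localized, the rest is a multiple-minimizer argument. Translation covariance of best approximation and $c\in F$ give $c_{F}(r)=c_{F}(z)-c=0$, so $0$ is the best approximation of $r$ and $\mathrm{dist}(r,F)=\|r\|=d$. Now set $w=e+r$; since $e\in F$ we have $\mathrm{dist}(w,F)=\mathrm{dist}(r,F)=d$. Because $r\in E_0(e)=\{e\}^{\perp}$, the elements $(1-\mu)e$ and $r$ are orthogonal, so \eqref{eq orthogonal are M-orthogonal} gives $\|w-\mu e\|=\|(1-\mu)e+r\|=\max\{|1-\mu|,d\}$ for every scalar $\mu$. Consequently $\mu e$ is a best approximation of $w$ in $F$ for \emph{every} $\mu$ with $|1-\mu|\le d$. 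If $d>0$ this produces two distinct best approximations, for instance $e$ and $(1+\tfrac{d}{2})e$, contradicting that $F$ is \v{C}eby\v{s}\"{e}v. Hence $d=0$, i.e. $z=c\in F$, which proves $E_0(e)\subseteq F$ and therefore $E_0(e)=F_0(e)$. The final assertion is then immediate: if $e$ is complete in $F$, meaning $F_0(e)=\{0\}$, the established equality gives $E_0(e)=\{0\}$, so $e$ is complete in $E$.

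The step I expect to be the main obstacle is the localization of $c$ into $E_0(e)$ carried out in the second paragraph. Without it the candidate minimizers $\mu e$ need not realize the distance to $F$; indeed, running the multiple-minimizer argument on $z$ directly only succeeds when $0$ happens to be the best approximation of $z$ (equivalently $\|z\|=\mathrm{dist}(z,F)$), which fails in general. It is precisely the symmetry supplied by the isometries $S_{\lambda}(e)$, together with uniqueness of the \v{C}eby\v{s}\"{e}v foot, that repairs this and makes the orthogonality computation available.
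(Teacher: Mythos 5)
Your proposal is correct and follows essentially the same route as the paper: both arguments first localize the best approximation $c$ of a point $z\in E_0(e)$ inside $E_0(e)$, and then use $e\perp (z-c)$ together with \eqref{eq orthogonal are M-orthogonal} to produce a one-parameter family of best approximations $c+\lambda e$ with $|\lambda|\leq \mathrm{dist}(z,F)$, contradicting the \v{C}eby\v{s}\"{e}v property unless $\mathrm{dist}(z,F)=0$. The only difference is the device used for the localization step: the paper applies Lemma~\ref{l contractive projection} to the contractive Peirce projection $P_0(e)$, whereas you use the surjective isometries $S_{\lambda}(e)$ of \eqref{eq mapping Slambda (e)} (taking $\lambda=i$) together with uniqueness of the foot --- the same symmetry-plus-uniqueness mechanism in a slightly different guise.
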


\begin{proof} Since $e$ is a tripotent in $F$ and the latter is a JB$^*$-subtriple of $E$, $e$ is a tripotent in $E$ and $F_0 (e) \subseteq E_0(e)$. Arguing by contradiction, let us assume that there exists $b\in E_0 (e) \backslash F_0 (e) =  E_0 (e) \backslash F \neq \emptyset$. Since $ \hbox{dist} (b,F) >0$ and $F$ is a \v{C}eby\v{s}\"{e}v subspace, there exists a unique $c_{_F} (b) \in F$ such that $\|b-c_{_F}(b)\| = \hbox{dist} (b,F)$.\smallskip

Since $P_0 (e) (F) \subseteq F$ and $P_0 (e) (b) =b$, Lemma \ref{l contractive projection} implies that $$P_0 (e) (c_{_F}(b)) = c_{_F}(b)\in F_0 (e).$$  Having in mind that $e\in E_2 (e)\perp  E_0 (e)\ni b- c_{_F}(b)$, we deduce, via \eqref{eq orthogonal are M-orthogonal}, that $$\| b-  c_{_F}(b) -\lambda e \| = \max\{ \| b-  c_{_F}(b) \|, |\lambda|  \} = \| b-  c_{_F}(b) \| = \hbox{dist} (b,F),$$ for every $|\lambda|\leq \hbox{dist} (b,F)$. This contradicts the uniqueness of the best approximation, $ c_{_F}(b),$ of $b$ in $F$, because $c_{_F}(b) +\lambda e \in F$ for every $|\lambda|\leq \hbox{dist} (b,F)$.
\end{proof}

\begin{prop}\label{prop B} Let $F$ be a \v{C}eby\v{s}\"{e}v  JB$^*$-subtriple of a JB$^*$-triple $E$. Suppose $e$ is a tripotent in $F$ with $F_0(e) = \{e\}^{\perp}_{F}\neq 0$. Then $E_2 (e) = F_2 (e)$.
\end{prop}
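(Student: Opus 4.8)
The plan is to prove the non-trivial inclusion $E_2(e)\subseteq F_2(e)$ (the reverse containment $F_2(e)=F\cap E_2(e)\subseteq E_2(e)$ being immediate) by contradiction against the uniqueness of best approximations, mirroring the proof of Proposition \ref{prop A} but perturbing in the Peirce-$0$ corner rather than the Peirce-$2$ corner. First I would record the Peirce bookkeeping: since $e\in F$ and $F$ is a JB$^*$-subtriple, the identities $P_2(e)=Q(e)^2$, $P_1(e)=2(L(e,e)-Q(e)^2)$ and $P_0(e)=Id_E-2L(e,e)+Q(e)^2$ show that each Peirce projection $P_k(e)$ maps $F$ into $F$; hence $F_k(e)=F\cap E_k(e)$ for $k=0,1,2$, and in particular $F_2(e)=F\cap E_2(e)$.

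Suppose then, for contradiction, that $E_2(e)\neq F_2(e)$, so that there is some $a\in E_2(e)\setminus F$. Since $F$ is \v{C}eby\v{s}\"{e}v and $\hbox{dist}(a,F)>0$, there is a unique best approximation $c_{_F}(a)\in F$. The key step is to locate this best approximation inside $F_2(e)$: because the Peirce projection $P_2(e)$ is contractive (cf. \cite[Corollary 1.2]{FridRusso}), satisfies $P_2(e)(F)\subseteq F$, and fixes $a$ (as $a\in E_2(e)$), Lemma \ref{l contractive projection} applied with $P=P_2(e)$ yields $P_2(e)(c_{_F}(a))=c_{_F}(a)$, i.e. $c_{_F}(a)\in F_2(e)$. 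Put $d:=a-c_{_F}(a)\in E_2(e)$, which is non-zero since $a\notin F$, and note $\|d\|=\hbox{dist}(a,F)>0$.

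Now I would use the hypothesis $F_0(e)\neq\{0\}$ to produce the contradiction. Choose a non-zero $w\in F_0(e)=\{e\}^{\perp}_{F}$. For every scalar $\mu$ the element $c_{_F}(a)+\mu w$ lies in $F$, and since $d\in E_2(e)$ while $\mu w\in E_0(e)$ are orthogonal, the $M$-orthogonality identity \eqref{eq orthogonal are M-orthogonal} gives
$$\big\|a-(c_{_F}(a)+\mu w)\big\|=\|d-\mu w\|=\max\{\|d\|,\,|\mu|\,\|w\|\}=\|d\|=\hbox{dist}(a,F),$$
for all $\mu$ with $|\mu|\,\|w\|\le\|d\|$. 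Thus $c_{_F}(a)+\mu w$ is a best approximation of $a$ in $F$ for a whole segment of values of $\mu$, contradicting the uniqueness of $c_{_F}(a)$ (as $w\neq 0$). Hence no such $a$ exists and $E_2(e)=F_2(e)$.

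The argument is short; the only delicate point is the middle step, namely that the best approximation of an element of $E_2(e)$ necessarily falls in $F_2(e)$. This is exactly what Lemma \ref{l contractive projection} delivers, once one verifies that $P_2(e)$ is a contractive projection of $E$ carrying $F$ into $F$ and fixing $a$; everything else is the standard orthogonality-plus-uniqueness mechanism already exploited in Proposition \ref{prop A}, with the hypothesis $F_0(e)\neq\{0\}$ entering precisely to furnish the non-zero perturbing vector $w$.
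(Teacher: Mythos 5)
Your proof is correct and takes essentially the same route as the paper's: both argue by contradiction, use Lemma \ref{l contractive projection} with $P=P_2(e)$ to place the best approximation $c_{_F}(a)$ inside $F_2(e)$, and then perturb by scalar multiples of a non-zero element of $F_0(e)$, invoking the $M$-orthogonality identity \eqref{eq orthogonal are M-orthogonal} to contradict uniqueness. The only differences are cosmetic: you take an arbitrary non-zero $w\in F_0(e)$ where the paper normalizes to a norm-one $z$, and you spell out the Peirce bookkeeping ($P_k(e)(F)\subseteq F$, hence $F_k(e)=F\cap E_k(e)$) that the paper leaves implicit.
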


\begin{proof} Clearly $F_2 (e) \subseteq  E_2 (e)$. We have to show that $E_2 (e) \subseteq  F_2 (e).$ Suppose, on the contrary, that $E_2 (e) \backslash F_2 (e) = E_2 (e) \backslash F\neq \emptyset$. Pick $b\in E_2 (e) \backslash F$. Since $F$ is a \v{C}eby\v{s}\"{e}v subspace of $E$, there exists a unique $c_{_{F}} (b)\in F$ satisfying $\| b-  c_{_F}(b) \| = \hbox{dist} (b,F)>0.$\smallskip

By Lemma \ref{l contractive projection} applied to $P= P_2 (e)$, $X= E$ and $V=F$, we deduce that $P_2 (e) ( c_{_F}(b)) =  c_{_F}(b).$\smallskip

By hypothesis, $F_0(e) = \{e\}^{\perp}_{F}\neq 0$. So, there exists a norm-one element $z\in F_0 (e)$. The conditions $b,\in E_2 (e)$, $c_{_F} (b)\in F_2 (e)$ and $z\in F_0 (e)$ combined with \ref{eq orthogonal are M-orthogonal} give $$\| b-  c_{_F}(b) -\lambda z \| = \max\{ \| b-  c_{_F}(b) \|, |\lambda|  \} = \| b-  c_{_F}(b) \| = \hbox{dist} (b,F),$$ for every $|\lambda|\leq \hbox{dist} (b,F)$, which contradicts the uniqueness of the best approximation of $b$ in $F$ because $c_{_F}(b) -\lambda z\in F$, for every $\lambda$ in the above conditions.
\end{proof}

Let $e$ and $v$ be tripotents in a JB$^*$-triple $E$. We shall say that $v\leq e$, when $e-v$ is a tripotent in $E$ with $e-v\perp v$ (compare the notation in \cite{FridRusso}).\smallskip

Let $E$ be a JB$^*$-triple. A subset $S \subseteq E$ is said to be \emph{orthogonal} if $0 \notin S$ and $x \perp y$ for every $x
\neq y$ in $S$. The minimal cardinal number $r$ satisfying $card(S) \leq r$ for every orthogonal subset $S \subseteq E$ is called the \emph{rank} of $E$ (and will be denoted by $r(E)$). Given a tripotent $e\in E$,  the rank of the Peirce-2 subspace $E_2(e)$ will be called the rank of $e$.\smallskip

Theorem 3.1 in \cite{BeLoPeRo} combined with Proposition 4.5.(iii) in \cite{BuChu} assure that a JB$^*$-triple is reflexive if and only if it is isomorphic
to a Hilbert space if, and only if, it has finite rank.\smallskip

Suppose $E$ is a rank-one JB$^*$-triple. The above comments show that $E$ is reflexive and hence a JBW$^*$-triple. Let $e$ be a complete tripotent in $E$. Since the rank of $e$ is smaller than the rank of $E$, we deduce that $e$ is a minimal tripotent in $E$. Proposition 3.7 in \cite{Burgos2} and its proof show that $E= \{e\}^{\perp\perp} = \{0\}^{\perp}$ is a rank-one Cartan factor of the form $L(H,\mathbb{C})$, where $H$ is a complex Hilbert space or a type 2 Cartan factor $II_{3}$ (it is known that $II_{3}$ is JB$^*$-triple isomorphic to a 3-dimensional complex Hilbert space). We have proved the following:

\begin{lemma}\label{l rank one JB*-triples are type 1 Cartan factors Hilbert} Every JB$^*$-triple of rank one is JB$^*$-isomorphic {\rm(}and hence isometric{\rm)} to a complex Hilbert space regarded as a type 1 Cartan factor.$\hfill\Box$
\end{lemma}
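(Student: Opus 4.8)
The plan is to prove Lemma~\ref{l rank one JB*-triples are type 1 Cartan factors Hilbert} by essentially assembling the pieces of structure theory that the paper has just recalled in the paragraph immediately preceding the statement. Let $E$ be a JB$^*$-triple of rank one. First I would observe that a rank-one triple has finite rank, so by the combination of Theorem~3.1 in \cite{BeLoPeRo} and Proposition~4.5.(iii) in \cite{BuChu} cited above, $E$ is reflexive; being reflexive it is in particular a dual Banach space, hence a JBW$^*$-triple, and by the Krein--Milman argument recorded on page~\pageref{page complete trip} it admits a complete tripotent $e$.

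The key step is to identify this $e$ as a \emph{minimal} tripotent. The rank of $e$ is by definition $r(E_2(e))$, and this cannot exceed $r(E)=1$; since $e\neq 0$ forces $E_2(e)\neq\{0\}$, we get $r(E_2(e))=1$. I would then argue that a complete tripotent of rank one is minimal: completeness gives $E_0(e)=\{0\}$, so $E=E_2(e)\oplus E_1(e)$, and the rank-one hypothesis together with the Peirce rules collapses $E_2(e)$ onto $\mathbb{C}e$, so that $e$ is minimal in the sense defined in the excerpt ($E_2(e)=\mathbb{C}e\neq\{0\}$). Having $e$ both complete and minimal, I would invoke Proposition~3.7 in \cite{Burgos2}, which (as the paper states) yields $E=\{e\}^{\perp\perp}=\{0\}^{\perp}$ and pins down $E$ as one of the rank-one Cartan factors, namely $L(H,\mathbb{C})$ for a complex Hilbert space $H$, or the exceptional low-dimensional factor $II_3$.

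The final step is to dispose of the apparent second possibility. Since $II_3$ is itself JB$^*$-triple isomorphic to a $3$-dimensional complex Hilbert space (stated in the excerpt), both cases in fact produce a complex Hilbert space regarded as a type~1 Cartan factor $L(H,\mathbb{C})\cong H$. Finally, because surjective linear isometries and triple isomorphisms coincide for JB$^*$-triples (Proposition~5.5 in \cite{Ka83}, recalled in \S2), the JB$^*$-isomorphism is automatically isometric, giving the parenthetical ``and hence isometric'' in the statement.

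The main obstacle I anticipate is not any deep calculation but rather the careful verification of the middle step: showing that a complete tripotent in a rank-one triple is genuinely minimal, i.e.\ that $E_2(e)=\mathbb{C}e$ rather than merely a rank-one JB$^*$-algebra. One must rule out, for instance, $E_2(e)$ being an infinite-dimensional rank-one spin-type algebra; the cleanest route is to note that $E_2(e)$ is a unital JB$^*$-algebra whose triple rank equals one, and that such an algebra must be $\mathbb{C}e$. Everything else is an application of the structure results already marshalled in the text, so the proof is genuinely short.
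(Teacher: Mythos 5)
Your proposal is correct and takes essentially the same route as the paper, whose own proof is precisely the paragraph preceding the lemma: finite rank implies reflexivity (hence $E$ is a JBW$^*$-triple admitting a complete tripotent $e$ by the Krein--Milman argument), the rank-one hypothesis forces $e$ to be minimal, and Proposition~3.7 of \cite{Burgos2} together with the identification of $II_3$ with a $3$-dimensional complex Hilbert space and Kaup's theorem on isometries finishes the argument. The one step you flag as delicate---that a unital JB$^*$-algebra of triple rank one must equal $\mathbb{C}e$---is left equally implicit in the paper and is settled by the triple functional calculus recalled on page~\pageref{page triple functional calculus}: a self-adjoint element of $E_2(e)$ outside $\mathbb{R}e$ generates a subtriple isomorphic to some $C_0(L)$ with $L$ containing at least two points, and two functions with disjoint supports give two nonzero orthogonal elements, contradicting rank one.
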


The above result is also stated in \cite[Corollary in page 308]{DaFri}.\smallskip

We have already commented that orthogonal elements are $M$-orthogonal in the sense of the geometric theory of Banach spaces (see \eqref{eq orthogonal are M-orthogonal}). We shall state next another results of geometric nature. Let $u$ and $v$ be two non-zero tripotents in a JB$^*$-triple $E$. We recall that $u$ and $v$ are \emph{colinear} (written $u\top v$) when $u\in E_1 (v)$ and $v\in E_1 (u)$ (cf. \cite[page 296]{DaFri}). Suppose $u\top v$ in $E$. Clearly, the JB$^*$-subtriple $E_{u,v}$ of $E$ generated by $u$ and $v$ is algebraically isomorphic to $\mathbb{C} u\otimes \mathbb{C} v$. We observe that $u$ and $v$ are minimal colinear tripotents in $E_{u,v}$. It follows from \cite[Proposition 5]{FridRusso} that $E_{u,v}$ is JB$^*$-triple isomorphic and hence isometric to $M_{1,2} (\mathbb{C})$ (regarded as a type 1 Cartan factor). We consequently have \begin{equation}\label{eq colinear are Hilbert 2-summand} \|\lambda u + \mu v\| = \left(|\lambda|^2 +|\mu|^2\right)^{\frac12},
\end{equation} for every $\lambda, \mu \in \mathbb{C}$. It should be also noted here that, in a Hilbert space $F$ regarded as a type 1 Cartan factor with product given in \eqref{eq triple product Hilbert type 1}. In this case, the tripotents of $F$ are precisely the elements in its unit sphere, and the relation of being Hilbert-orthogonal is exactly the relation of colinearity in terms of the triple product.\smallskip

We have shown several examples of Hilbert spaces (regarded as a type 1 Cartan factor) which are \v{C}eby\v{s}\"{e}v JB$^*$-subtriples of JB$^*$-triples of rank one and two. We present next more examples of Hilbert spaces which are \v{C}eby\v{s}\"{e}v JB$^*$-subtriples of JB$^*$-triples having a bigger rank. The first example is a construction with classical Banach spaces and the second one is an isometric translation to the setting of JB$^*$-triples.

\begin{remark}\label{example Hilbert Cebysev in rectangular Banach spaces} Let $H$ be complex Hilbert space of dimension $2$ with norm denoted by $\|.\|_2$. We consider the Banach space $\displaystyle X = \overbrace{H \oplus^{\ell_\infty} \ldots \oplus^{\ell_\infty} H}^{(n)}$ ($n\geq 2$). Let $\{\xi_1,\xi_2\}$ be an orthonormal basis of $H$. Each $h\in H$ writes uniquely in the form $h =  \lambda_1 \xi_1 +  \lambda_2 \xi_2$. Let $V$ denote the 2-dimensional subspace of $X$ generated by the vectors $e_1=(\xi_1,\ldots, \xi_1)$ and $e_2=(\xi_2,\ldots, \xi_2)$. That is, every vector in $V$ writes in the form $\lambda e_1 + \mu e_2$ Clearly, $$\| \lambda e_1 + \mu e_2 \| = \|\lambda (\xi_1,\ldots, \xi_1) + \mu (\xi_2,\ldots, \xi_2) \|_2 $$ $$= \max_{i=1,\ldots,n} \|\lambda \xi_1 + \mu \xi_2 \|_2 = \sqrt{|\lambda|^2 +|\mu|^2} ,$$ and hence $V$ is isometrically isomorphic to a Hilbert space.\smallskip

We claim that $V$ is a \v{C}eby\v{s}\"{e}v subspace of $X$. Indeed, let $x=(h_1,\ldots, h_n)$ be an element in $X$ and let $\lambda e_1 + \mu e_2\in V$. We write $h_i = \lambda^{i}_1 \xi_1 +  \lambda^{i}_2 \xi_2$. We write the formula for the distance from $x$ to $V$ in the form: $$\hbox{dist} (x, V)^2= \inf_{\lambda,\mu \in \mathbb{C}} \| (h_1,\ldots, h_n) - \lambda e_1 - \mu e_2 \|^2 $$ $$= \inf_{\lambda,\mu \in \mathbb{C}} \max_{i=1,\ldots,n} \|\lambda^{i}_1 \xi_1 +  \lambda^{i}_2 \xi_2 - \lambda \xi_1 -\mu \xi_2 \|_2^2$$
$$ = \inf_{\lambda,\mu \in \mathbb{C}} \max_{i=1,\ldots,n} \left( |\lambda_1^{i}- \lambda |^2 + |\lambda^{i}_2 -\mu |^2 \right)^{\frac12}= \inf_{\lambda,\mu \in \mathbb{C}} \max_{i=1,\ldots,n} \hbox{dist}_{_{\mathbb{C}^2}} ((\lambda_1^{i},\lambda_2^{i}),(\lambda,\mu)).$$

Our problem is equivalent to determine a point $(\lambda,\mu)\in \mathbb{C}^{2}$ so that the maximum Euclidean distance from $(\lambda,\mu)$ to the points $(\lambda_1^{i}, \lambda^{i}_2) \in \mathbb{C}^{2}$ ($i=1,\ldots,n$) is minimized, where $\mathbb{C}^{2}$ is equipped with the Euclidean distance $\|(\lambda , \mu ) \|_2 = \sqrt{|\lambda|^2 +|\mu|^2}$. This problem is commonly called ``\emph{the Euclidean delivery problem}'' or ``\emph{the min-max location problem}'' or ``\emph{the minimum covering sphere problem}''. It is known that an equivalent reformulation of the problem is:
$$\hbox{Min} \{\rho : (\lambda,\mu)\in \mathbb{C}^2, \ \rho>0, \ \|(\lambda_1^{i},\lambda_2^{i})-(\lambda,\mu)\|_2\leq \rho, \ \forall i\}.$$
The goal is to find the circle of center $(\lambda,\mu)\in \mathbb{C}^{2}$ of smallest radius $\rho$ that encloses all the points $(\lambda_1^{i}, \lambda^{i}_2) \in \mathbb{C}^{2}$ ($i=1,\ldots,n$).\smallskip

It is well known that a solution to the the minimum covering sphere problem always exists, the center $(\lambda,\mu)$ and the radius $\rho$ are unique (cf. \cite{HearnVijay82}, \cite{Francis67}). This shows that every element $x=(\lambda^{1}_1 \xi_1 +  \lambda^{1}_2 \xi_2,\ldots, \lambda^{n}_1 \xi_1 +  \lambda^{n}_2 \xi_2)$ in $X$ admits a unique best approximation in $V$, which proves the claim.
\end{remark}

\begin{remark}\label{example Hilbert Cebysev in rectangular} Let $e$ and $u$ be two colinear complete tripotents in a JB$^*$-triple $E$. Let us assume that we can find two sets $\{e_1,\ldots,e_n\}$ and $\{u_1,\ldots,u_n\}$ of mutually orthogonal tripotents in $E_2(e)$ and $E_2(u)$, respectively, such that $e_i\top u_i$, for all $i$, and $u_i\perp e_j$, for every $i\neq j$. Take, for example, $E=M_{n\times (2n)} (\mathbb{C})$, $e=\sum_{i=1}^n w_{i,i}$, $u=\sum_{i=1}^n w_{i,i+n}$, $e_i= w_{i,i}$ and $u_i = e=w_{i,i+n}$, where $w_{i,j}$ is the matrix with entry 1 at the position $i,j$ and zero elsewhere.\smallskip

Let $F$ be the JB$^*$-subtriple of $E$ generated by $\{e_1,\ldots, e_n, u_1,\ldots, u_n\}$, and let $W$ be the closed JB$^*$-subtriple of $F$ generated by $\{e,u\}.$ For each $i\in \{1,\ldots, n\}$, $e_i \top u_i$ and hence $$\|\lambda_i e_i + \mu_i u_i \| = \sqrt{|\lambda_i|^2+|\mu_i|^2},$$ that is, the subtriple, $F_i$, generated by $e_i$ and $u_i$ is a 2-dimensional complex Hilbert space (cf. \eqref{eq colinear are Hilbert 2-summand}). Since, for each $i\neq j$, $\{e_i,u_i\}\perp \{e_j,u_j\}$ ($F_i\perp F_j$), we deduce from \eqref{eq orthogonal are M-orthogonal} that $\|x_i+ x_j \| = \max\{\|x_i\|,\|x_j\|\}$, for every $x_i\in F_i, x_j\in F_j$, $i\neq j$. Having in mind that $F = F_1\oplus^{\ell_{\infty}}\ldots \oplus^{\ell_{\infty}} F_n $, and $F_i \equiv \ell_2^2$, we can easily see that $F$ is isometrically isomorphic to the space $X$ in Remark \ref{example Hilbert Cebysev in rectangular Banach spaces}. It is also easy to see that under the natural isometric identification of $F$ and $X$ in Remark \ref{example Hilbert Cebysev in rectangular Banach spaces}, the JB$^*$-subtriple $W$ is identified with the subspace $V$ in that Remark. Therefore, it follows that $W$ is a \v{C}eby\v{s}\"{e}v JB$^*$-subtriple of $F$. The JB$^*$-triple $F$ has been constructed to have rank $n.$
\end{remark}

The theorem describing the \v{C}eby\v{s}\"{e}v  JBW$^*$-subtriples of a JBW$^*$-triple can be stated now. We shall show that the examples given in Remark \ref{remark spin} and the comments before it are essentially the unique examples of non-trivial \v{C}eby\v{s}\"{e}v JBW$^*$-subtriples.

\begin{thm}\label{t Cebysev  JBW*-subtriples} Let $N$ be a non-zero \v{C}eby\v{s}\"{e}v JBW$^*$-subtriple of a JBW$^*$-triple $M$. Then exactly one of the following statements holds:\begin{enumerate}[$(a)$]\item $N$ is a rank one JBW$^*$-triple with dim$(N)\geq 2$ (i.e. a complex Hilbert space regarded as a type 1 Cartan factor). Moreover, $N$ may be a closed subspace of arbitrary dimension and $M$ may have arbitrary rank;
\item $N= \mathbb{C} e$, where $e$ is a complete tripotent in $M$;
\item $N$ and $M$ have rank two, but $N$ may have arbitrary dimension;
\item $N$ has rank greater or equal than three and $N=M$.
\end{enumerate}
\end{thm}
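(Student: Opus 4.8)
The four alternatives are mutually exclusive and are governed entirely by the rank of $N$ (rank one with $\dim N=1$ versus $\dim N\ge 2$; rank two; rank $\ge 3$), so it suffices to place $N$ in the correct class. Throughout, the two workhorses are Propositions \ref{prop A} and \ref{prop B}, whose combined effect I would record as a single principle $(\star)$: for every non-zero tripotent $w\in N$ one has $M_0(w)=N_0(w)$ (Proposition \ref{prop A}, with no extra hypothesis), and as soon as $N_0(w)=\{w\}^{\perp}_N\neq 0$ one has in addition $M_2(w)=N_2(w)$ (Proposition \ref{prop B}). Two consequences I would isolate at once: every complete tripotent of $N$ is complete in $M$ (take $w$ complete, so $N_0(w)=0=M_0(w)$); and, whenever $\mathrm{rank}(N)\ge 2$, every minimal tripotent $v\in N$ is minimal in $M$ (choose $v'\in N$ with $v'\perp v$, so $N_0(v)\neq 0$ and $(\star)$ gives $M_2(v)=N_2(v)=\mathbb{C}v$). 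Since $N$ is itself a JBW$^*$-triple it possesses complete tripotents by Krein--Milman (cf. \cite{Chu}). This already disposes of $\mathrm{rank}(N)=1$: by Lemma \ref{l rank one JB*-triples are type 1 Cartan factors Hilbert}, $N$ is then a complex Hilbert space regarded as a type $1$ Cartan factor; if $\dim N\ge 2$ we are in case $(a)$, while if $\dim N=1$ then $N=\mathbb{C}e$ for a unit vector $e$, which is complete in $N$ and hence in $M$, giving case $(b)$. The ``moreover'' assertions in $(a)$ are exactly the content of Remarks \ref{remark spin}, \ref{example Hilbert Cebysev in rectangular Banach spaces} and \ref{example Hilbert Cebysev in rectangular}, and need not be reproved.

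The heart of the matter is $\mathrm{rank}(N)\ge 3$, where I claim $N=M$. I would choose three mutually orthogonal non-zero tripotents $v_1,v_2,v_3\in N$, set $w=v_1+v_2+v_3$, and write out the joint Peirce decomposition of $M$ relative to $\{v_1,v_2,v_3\}$, whose summands are the diagonal blocks $M_2(v_i)$, the off-diagonal blocks $M_1(v_i)\cap M_1(v_j)$ for $i<j$, the edge blocks $M_1(v_i)\cap\bigcap_{j\neq i}M_0(v_j)$, and $M_0(w)$. The plan is then to show each summand lies in $N$. The diagonal blocks satisfy $M_2(v_i)=N_2(v_i)\subseteq N$ by $(\star)$ (here $v_j\perp v_i$ for some $j\neq i$). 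Each off-diagonal block obeys $M_1(v_i)\cap M_1(v_j)\subseteq M_2(v_i+v_j)=N_2(v_i+v_j)\subseteq N$, the middle equality being $(\star)$ applied to $v_i+v_j$, whose orthogonal complement in $N$ contains the third tripotent $v_k$. Each edge block is contained in $M_0(v_j)=N_0(v_j)\subseteq N$ for any $j\neq i$. Finally $M_0(w)=N_0(w)\subseteq N$ by Proposition \ref{prop A}. As these blocks span $M$, this yields $M\subseteq N$, i.e. $N=M$, which is case $(d)$. I would emphasise that this argument never appeals to minimal tripotents, so it is valid for arbitrary, possibly infinite, rank and for non-atomic $M$.

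It remains to treat $\mathrm{rank}(N)=2$ and to deduce $\mathrm{rank}(M)=2$, which is case $(c)$. Being of finite rank, $N$ is reflexive and atomic (cf. \cite{BeLoPeRo,BuChu}), so it carries a frame $\{e_1,e_2\}$ of two orthogonal minimal tripotents with $e_1+e_2$ complete in $N$. By the consequences of $(\star)$ recorded in the first paragraph, $e_1$ and $e_2$ are minimal in $M$ and $e_1+e_2$ is complete in $M$; in particular $M_0(e_1+e_2)=0$, so no non-zero tripotent of $M$ is orthogonal to both $e_1$ and $e_2$. Hence $\{e_1,e_2\}$ is a maximal orthogonal family of minimal tripotents of $M$, i.e. a frame of $M$, and the rank theory of JBW$^*$-triples then forces $\mathrm{rank}(M)=2$.

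The step I expect to be the main obstacle is precisely this last rank bookkeeping: passing from ``$M$ possesses a frame of cardinality two'' to ``$\mathrm{rank}(M)=2$'' requires that all maximal orthogonal families of minimal tripotents of $M$ share the same cardinality (equivalently, that a complete tripotent which is a finite sum of orthogonal minimal tripotents has rank equal to $\mathrm{rank}(M)$). This is delicate because $M$ is a priori an arbitrary JBW$^*$-triple, with no atomicity given in advance; I would either invoke the structure and rank results for JBW$^*$-triples directly, or first localise to $M_2(e_1+e_2)$ --- a unital JBW$^*$-algebra whose unit splits into two orthogonal minimal projections, hence of rank two --- and then argue that a complete tripotent $e$ satisfies $\mathrm{rank}(M)=\mathrm{rank}(M_2(e))$. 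The analogous bookkeeping is avoided entirely in case $(d)$ by the Peirce-block argument above, which is why the rank-two alternative, rather than the rank $\ge 3$ one, carries the genuine difficulty.
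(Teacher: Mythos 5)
Your proposal is correct and is essentially the paper's own proof: the same case analysis driven by Propositions \ref{prop A} and \ref{prop B}, the same use of Lemma \ref{l rank one JB*-triples are type 1 Cartan factors Hilbert} in the rank-one case and of Remark \ref{remark spin} and Remarks \ref{example Hilbert Cebysev in rectangular Banach spaces}--\ref{example Hilbert Cebysev in rectangular} for the ``moreover'' assertions, while your joint Peirce decomposition in the rank $\geq 3$ case is just a systematic repackaging of the paper's argument, which fixes $x\in M_1(e_1)$, splits it as $P_1(e_2)(x)+P_0(e_2)(x)$, and places each piece in $N_2(e_1+e_2)$ and $N_0(e_2)$ respectively. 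The rank-two bookkeeping you single out as the main obstacle is in fact treated no more fully in the paper, which passes directly from ``$e=e_1+e_2$ is complete in $M$ with $e_1,e_2$ minimal'' to ``$M$ has rank two,'' so your explicit flagging of the needed fact $\mathrm{rank}(M)=\mathrm{rank}(M_2(e))$ for a complete tripotent $e$ matches, and if anything exceeds, the paper's own level of detail at that step.
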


\begin{proof} We can always find a complete tripotent $e$ in $N$ (see the comments in page \pageref{page complete trip}). Proposition \ref{prop A} implies that $e$ is complete in $M$ (i.e. $M_0 (e) =\{0\}$). We have three possibilities:\begin{enumerate}[$(i)$]\item $e$ has rank one in $N$;
\item $e$ has rank  $2$ in $N$;
\item $e$ has rank greater or equal than $3$ in $N$.
\end{enumerate} \smallskip

$(i)$ Suppose first that $e$ has rank one in $N$. In this case, $e$ is a minimal and complete tripotent in $N$. Therefore, $N$ is a complex Hilbert space regarded as a type 1 Cartan factor (cf. Lemma \ref{l rank one JB*-triples are type 1 Cartan factors Hilbert} or Proposition 3.7 in \cite{Burgos2}).\smallskip

The examples given before Remark \ref{remark spin} and in Remark \ref{example Hilbert Cebysev in rectangular} show that $N$ may have arbitrary dimension and $M$ may have rank as big as desired.\smallskip

$(ii)$ We assume now that $e$ has rank  $2$ in $N$. Then there exist two non-zero minimal, mutually orthogonal tripotents $e_1,e_2\in N$ with $e= e_1+e_2$. Propositions \ref{prop A} and \ref{prop B} show that $M_2 (e_j)= N_2 (e_j)$, and $M_0 (e_j)= N_0 (e_j)\neq \{0\}$, for every $j$ in $\{1,2\}$. Since $M_2 (e_j)= N_2 (e_j) = \mathbb{C} e_j$, we deduce that $e_1$ and $e_2$ are minimal tripotents in $M$. We also know that $e= e_1 +e_2$ is a complete in $M$ (i.e. $M= M_2 (e) \oplus M_1(e)$), which proves that $M$ has rank two. The statement concerning the dimension of $N$ follows from the example in Remark \ref{remark spin}.\smallskip

$(iii)$ Suppose now that $e$ has rank greater or equal than $3$ in $N$. We shall show that $M=N$. Under the present assumptions, we can find three non-zero mutually orthogonal tripotents $e_1, e_2, e_3$ with $e_1+e_2+e_3 =e.$ Clearly, $N_0 (e_j+e_k)\neq \{0\},$ for every $k\neq j$ in $\{1,2,3\}$. Propositions \ref{prop A} and \ref{prop B} assure that $M_2 (e_j+ e_k) = N_2 (e_j+ e_k)$, $M_0 (e_j+ e_k)= N_0 (e_j+ e_k)$, $M_2 (e_j)= N_2 (e_j)$, and $M_0 (e_j)= N_0 (e_j)$, for every $k\neq j$ in $\{1,2,3\}$. In the Peirce decomposition $$M = M_2 (e_1) \oplus M_1 (e_1) \oplus M_0 (e_1),$$ we have $M_2(e_1) = N_2 (e_1)$ and $M_0(e_1) = N_0 (e_1)$. Pick $x\in M_1 (e_1)$. Since $e_1\perp e_j$ ($j=2,3$) we have $M_1 (e_1) \cap M_2 (e_j)= \{0\}$ for every $j=2,3$. Therefore $$x = P_1 (e_2) (x) + P_0(e_2) (x),$$ where $P_0(e_2) (x)\in M_0 (e_2) = N_0 (e_2)\subseteq N$ and $P_1(e_2) (x) \in P_1 (e_2) (N_1 (e_1))$. Since $$\frac12   P_0(e_2) (x) +\frac12  P_1(e_2) (x) = \frac12 x = \{e_1,e_1,  x\} $$ $$= \{e_1,e_1,  P_0(e_2) (x)\} +\{e_1,e_1,   P_1(e_2) (x)\},$$ it follows from Pierce rules that $$\frac12   P_1(e_2) (x) = \{e_1,e_1,  P_1(e_2) (x)\}, $$ and hence $ P_1(e_2) (x) \in M_1 (e_1) \cap M_1 (e_2)$. The condition $e_1\perp e_2$ leads us to $\{e_1+e_2,e_1+e_2, P_1(e_2) (x)\} = P_1(e_2) (x)$, which means that $$P_1(e_2) (x)\in M_2 (e_1+e_2) = N_2 (e_1+e_2)\subseteq N.$$ We have therefore shown that $x = P_1 (e_2) (x) + P_0(e_2) (x)\in N$, which implies that $M_1 (e_1) \subseteq N$ and consequently $M= N$. This concludes the proof.
\end{proof}

Let us recall that a C$^*$-algebra is reflexive if and only if it if finite dimensional (cf.  \cite[Proposition 2]{SakWC}). Consequently, a C$^*$-algebra has finite rank if and only if it is finite dimensional. It is further known that a C$^*$-algebra $A$ has rank one if, and only if, $A= \mathbb{C} 1.$  In particular, the result established by Robertson in \cite[Theorem 6]{Robertson1} (see Theorem \ref{t Robertson Cebysev finite dimensional von Neumann subalgebras in von Neumann algebras}) is a direct consequence of our last theorem.

\begin{corollary}\label{cor JBW*-subtriples of von Neumann algebras} Let $M$ be an infinite dimensional von Neumann algebra. Let $N$ be a \v{C}eby\v{s}\"{e}v von Neumann subalgebra of $M$. Then $N = \mathbb{C} 1$ or $M=N$. $\hfill\Box$
\end{corollary}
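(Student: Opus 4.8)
The plan is to read the statement off Theorem \ref{t Cebysev JBW*-subtriples} through the rank/dimension dictionary for C$^*$-algebras recalled immediately before the corollary. First I would observe that $M$, being a von Neumann algebra, is a JBW$^*$-triple under the triple product \eqref{eq ternary product on C*-algebras}, and that a (nonzero, unital) von Neumann subalgebra $N$ is weak$^*$-closed and stable under the product and the involution, hence under the triple product; thus $N$ is a JBW$^*$-subtriple of $M$. Since $N$ is assumed to be \v{C}eby\v{s}\"{e}v in $M$, Theorem \ref{t Cebysev JBW*-subtriples} applies and places $N$ in exactly one of the cases $(a)$--$(d)$. The two conclusions $N=\mathbb{C}1$ and $N=M$ will then emerge after eliminating $(a)$ and $(c)$ and identifying the survivors.

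The eliminations are immediate from the cited facts. In case $(c)$ the triple $M$ has rank two, in particular finite rank, so a C$^*$-algebra of finite rank being finite dimensional forces $M$ to be finite dimensional, contradicting the hypothesis. In case $(a)$ the subtriple $N$ has rank one; but a C$^*$-algebra of rank one equals $\mathbb{C}1$ and is therefore one dimensional, contradicting $\dim(N)\geq 2$. Case $(d)$ hands us the conclusion $N=M$ with nothing further to prove.

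The only genuine verification is case $(b)$, where $N=\mathbb{C}e$ for some complete tripotent $e$ of $M$, and I must show $N=\mathbb{C}1$. Since $N$ is a one-dimensional von Neumann algebra, its unit $p:=1_N$ is a projection of $M$ lying in $N=\mathbb{C}e$; hence $e$ and $p$ span the same line and $e=\lambda p$ with $|\lambda|=1$. As $|\lambda|=1$ gives $L(e,e)=|\lambda|^2 L(p,p)=L(p,p)$, the tripotents $e$ and $p$ share the same Peirce decomposition, so $p$ is also complete in $M$. Computing the Peirce-zero space of the projection $p$ as $M_0(p)=(1-p)M(1-p)$ and noting $1-p=(1-p)\,1\,(1-p)\in M_0(p)$, completeness ($M_0(p)=\{0\}$) yields $1-p=0$, i.e. $p=1$. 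Therefore $N=\mathbb{C}p=\mathbb{C}1$, and combining all cases gives $N=\mathbb{C}1$ or $N=M$.

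I expect the only delicate point to be the bookkeeping in case $(b)$: converting the geometric notion ``$e$ is a complete tripotent'' into the algebraic identity $p=1$ via the Peirce-zero computation for a projection. Everything else is a direct translation of Theorem \ref{t Cebysev JBW*-subtriples} through the equivalences ``finite rank $\Leftrightarrow$ finite dimensional'' and ``rank one $\Leftrightarrow$ $\mathbb{C}1$'' for C$^*$-algebras.
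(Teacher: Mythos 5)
Your proposal is correct and follows essentially the same route as the paper, which derives the corollary directly from Theorem \ref{t Cebysev  JBW*-subtriples} together with the facts recalled just before it (a C$^*$-algebra of finite rank is finite dimensional, and has rank one precisely when it equals $\mathbb{C}1$). Your explicit verification in case $(b)$ --- that the projection $p$ spanning $N$ satisfies $M_0(p)=(1-p)M(1-p)=\{0\}$, hence $p=1$ --- is exactly the bookkeeping the paper leaves implicit in its one-line proof.
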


We have already seen that, for each natural $n$, we can find a complex Hilbert space (of dimension 2) which is a \v{C}eby\v{s}\"{e}v JB$^*$-subtriple of a JB$^*$-triple having rank $n$. It is natural to ask whether we can find a precise description of those complex Hilbert spaces which are \v{C}eby\v{s}\"{e}v JBW$^*$-subtriples of a JBW$^*$-triple. Another general question that remains open in this paper is the following:

\begin{problem}\label{problem2} Determine the \v{C}eby\v{s}\"{e}v JB$^*$-subtriples of a general JB$^*$-triple.
\end{problem}

\end{document}